\documentclass[11pt]{article}
\usepackage[T1]{fontenc}
\usepackage{lmodern}
\usepackage{amsmath,amssymb,amsthm,mathtools}
\usepackage[margin=1in]{geometry}
\usepackage{microtype}
\usepackage[colorlinks=true,linkcolor=blue,citecolor=blue,urlcolor=blue]{hyperref}

\newtheorem{theorem}{Theorem}[section]
\newtheorem{lemma}[theorem]{Lemma}
\newtheorem{corollary}[theorem]{Corollary}
\theoremstyle{remark}
\newtheorem*{remark*}{Remark}
\newtheorem{conjecture}[theorem]{Conjecture}

\newcommand{\F}{\mathcal F}
\newcommand{\G}{\mathcal G}
\newcommand{\Hh}{\mathcal H}
\newcommand{\M}{\mathcal M}
\newcommand{\Nn}{\mathcal N}
\newcommand{\Tt}{\mathcal T}
\DeclareMathOperator{\cov}{cov}
\newcommand{\bin}[2]{\binom{#1}{#2}}

\title{The Erd\H{o}s--Chv\'atal Simplex Conjecture}
\author{Jing Huang\footnote{E-mail address: {\it
jinghuangscut@gmail.com
}}
}

\date{\small
School of Mathematics and Information Science,
Guangzhou University, Guangzhou 510006, China
}

\begin{document}
\maketitle

\begin{abstract}
We prove the Erd\H{o}s--Chv\'atal simplex conjecture
across its entire parameter range.
If $2\le d<k$  and $(d+1)k\le dv$, then
every $d$-simplex-free family of $k$-subsets of $[v]$ has at most
$\binom{v-1}{k-1}$ members, with equality only for a full star.

\medskip
\textbf{2020 MSC:} Primary 05D05; Secondary 05C65.

\textbf{Keywords:} Extremal combinatorics; Erd\H{o}s--Chv\'atal simplex conjecture; 
Intersection theorem;  Erd\H{o}s--Ko--Rado theorem.
\end{abstract}

\section{Introduction}

Throughout this paper, let $V=[v]=\{1,\ldots,v\}$, and let
$\binom{V}{k}$ denote the family of all $k$-element subsets
of $V$. A family $\mathcal F\subseteq\binom{V}{k}$ is
\emph{intersecting} if every two of its members have nonempty
intersection. The \emph{full star} with centre $\alpha\in V$ is
\[
 \mathcal S_\alpha(V,k)
   =\{A\in\binom{V}{k}:\alpha\in A\}.
\]
When the ground set and the uniformity are understood, we simply
write $\mathcal S_\alpha$. A full star has
$\binom{v-1}{k-1}$ members. The Erd\H{o}s--Ko--Rado (EKR)
theorem is a foundational result in extremal combinatorics:
if $v\ge2k$, every intersecting family in $\binom{V}{k}$ has
at most $\binom{v-1}{k-1}$ members; if $v>2k$, full stars
are the only families attaining this bound \cite{An,EKR,GM}.

The EKR theorem gives the full-star bound when two disjoint
sets are forbidden. This leads to a natural question: does
the same bound hold when the forbidden configuration involves
three or more sets? The first such problem concerns a
\emph{triangle}: three pairwise intersecting sets with an empty
total intersection. A full star contains no triangle.
In 1971, Erd\H{o}s asked for the maximum size of a
triangle-free uniform family and suggested the full-star
bound \cite{Erdos1971}.
This assertion subsequently became known as
Erd\H{o}s's triangle conjecture \cite{MV}.
The problem asks whether the EKR bound survives a restriction
on three-way intersections.
In 1974, Chv\'atal extended the problem to higher dimensions
\cite{Chvatal}.
In modern terminology, the forbidden configuration is a
$d$-simplex. 
For $d\ge1$, a \emph{$d$-simplex}
is a family of distinct sets $A_1,\ldots,A_{d+1}$ satisfying
\[
 \bigcap_{i=1}^{d+1}A_i=\varnothing,
 \qquad
 \bigcap_{i\ne j}A_i\ne\varnothing
 \quad(1\le j\le d+1).
\]
A family is \emph{$d$-simplex-free} if it contains no such
subfamily. Thus a simplex is a minimal collection of sets with
empty total intersection, and Erd\H{o}s's original triangle
problem is precisely the case $d=2$. 
Since all its members share a common element, a full star
contains no simplex of any dimension.
The Erd\H{o}s--Chv\'atal simplex conjecture proposes a broad
generalization of the EKR theorem: in every dimension $d\ge2$,
forbidding a $d$-simplex should determine both the sharp
extremal bound and the complete structure of the families
attaining it, with full stars as the unique extremal families
throughout the following parameter range \cite{Currier,KL}.

\begin{conjecture}\label{conj:EC}
Let $2\le d<k$  and $(d+1)k\le dv$.
If $\mathcal F\subseteq\binom{V}{k}$ is $d$-simplex-free, then
\begin{equation*} 
 |\mathcal F|\le\binom{v-1}{k-1}.
\end{equation*}
Equality holds if and only if $\mathcal F$ is a full star.
\end{conjecture}

The $d=1$ version of the numerical assertion is exactly the
EKR theorem: a $1$-simplex consists of two disjoint nonempty
sets, and the parameter condition becomes $v\ge2k$.
For $d=1$, full-star uniqueness holds when $v>2k$; at
$v=2k$, additional extremal families occur for $k\ge2$.
In this precise sense, Conjecture~\ref{conj:EC} is a
higher-dimensional generalization of the EKR theorem.
For $d\ge2$, its range also includes values $v<2k$, where
pairwise intersection is automatic and the restriction on
higher-order intersections becomes essential.
Early results established the conjecture in several distinct
regimes. Chv\'atal proved the case $k=d+1$ \cite{Chvatal}.
Frankl's intersection theorem \cite{Frankl1976} yields the
numerical bound in the strict range
$
 k>\frac{d-1}{d}v;
$
this consequence is recorded in \cite[p.~170]{Frankl1981}.
Bermond and Frankl obtained further infinite families of
parameter values \cite{BermondFrankl}. Frankl subsequently
proved an asymptotically sharp bound for fixed $k>d$, together
with exact results for sufficiently large $v$ in several cases
\cite{Frankl1981}. Frankl and F\"uredi then established the
exact bound and full-star uniqueness for every fixed $k>d$
and all $v>v_0(k)$ \cite{FranklFuredi1987}.
For triangles, Cs\'ak\'any and Kahn gave a homological proof
of the $3$-uniform case \cite{CsakanyKahn}, and Mubayi and
Verstra\"ete settled the entire case $d=2$, for all $k\ge3$
and $v\ge3k/2$ \cite{MV}.

Further progress required arguments that allow the uniformity
to grow with the ground set. For every fixed $d\ge2$,
Keevash and Mubayi developed a stability approach and proved
the conjecture when $\zeta v<k<v/2-T$, for sufficiently large
$v$, where $\zeta>0$ is fixed and $T$ depends on $d$ and
$\zeta$ \cite{KeevashMubayi}. Keller and Lifshitz developed
a general junta method for forbidden hypergraphs, approximating
large families by families whose membership is determined by
a bounded number of coordinates. Combined with arguments
specific to simplices, their method proved the conjecture for
every admissible $k$ whenever $v>v_0(d)$
\cite{KL}. Thus the ground-set threshold could
be made independent of $k$, although it still depended on $d$.

A related development concerns forbidden configurations with
small union. Frankl and F\"uredi studied three sets with empty
total intersection and a restriction on their union
\cite{FranklFuredi1983}. Keevash and Mubayi  proposed
the stronger \emph{simplex-cluster conjecture}
\cite{KeevashMubayi}: for $d<k$ and $v>(d+1)k/d$, the same
extremal conclusion should hold when only $d$-simplices whose
union has size at most $2k$ are forbidden. These configurations
are called \emph{$d$-simplex-clusters}. Lifshitz proved this
conjecture when $k$ is at least a fixed positive proportion of
$v$ and $v$ is sufficiently large in terms of that proportion
and $d$ \cite{Lifshitz}. Currier then established it for all
$d\ge3$, $d<k$, and $v\ge2k-d+2$
\cite{Currier}. Every simplex-free family is
simplex-cluster-free, so Currier's theorem also proves
Conjecture~\ref{conj:EC} in this range. These results left
the small-ground-set regime as the remaining obstacle to
a proof covering all parameters simultaneously.

In this paper, we prove the Erd\H{o}s--Chv\'atal simplex
conjecture in its full parameter range, including the
classification of all extremal families.
\begin{theorem}\label{thm:main}
Conjecture~\ref{conj:EC} holds.
\end{theorem}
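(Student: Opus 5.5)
Since Currier's theorem already covers $d\ge3$, $v\ge2k-d+2$, and Mubayi--Verstra\"ete covers $d=2$, the remaining regime is $d\ge3$ with $(d+1)k/d\le v\le 2k-d+1$, i.e.\ $v<2k$, where any two $k$-sets automatically intersect. My plan is to run a single argument there by passing to complements. Put $m=v-k$, so $m\le k-d+1$ and $(d+1)k\le dv$ becomes $k\le dm$, and set $\mathcal C=\{V\setminus A:A\in\F\}\subseteq\binom{V}{m}$. A $d$-simplex $A_1,\dots,A_{d+1}$ in $\F$ corresponds to complements $B_i=V\setminus A_i$ whose union is $V$, while no $d$ of them cover $V$. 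A $d$-simplex-free $\F$ therefore becomes a family $\mathcal C$ of $m$-sets with no \emph{minimal} cover of $V$ by exactly $d+1$ members. In this language the target bound $\binom{v-1}{k-1}=\binom{v-1}{m}$ is the number of $m$-sets avoiding a fixed point, and the full star corresponds to the family $\binom{V\setminus\{\alpha\}}{m}$.

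The core step is an induction on $d$ via a link/shadow argument, in the spirit of Frankl's intersection theorem, which handles $k>\frac{d-1}{d}v$. First I would prove the base case: if no $d+1$ members of $\mathcal C$ cover $V$ at all, then $|\mathcal C|\le\binom{v-1}{m}$. This is Frankl's theorem in complement form; the condition $(d+1)m\ge v$ is where $(d+1)k\le dv$ enters. Next, take a minimal cover $B_1,\dots,B_j$ of $V$ with $j\le d$ (if none exists we are done by the base case). Then use the simplex-free hypothesis to show that the sets covering the ``private'' elements of each $B_i$ are constrained, and bound $|\mathcal C|$ by summing over a point $x$ of the Frankl--F\"uredi kernel type: $|\mathcal C|\le|\mathcal C(\bar x)|+|\mathcal C(x)|$, where $\mathcal C(\bar x)$ consists of the members avoiding $x$ (at most $\binom{v-1}{m}$ trivially) and $\mathcal C(x)$ must be shown empty. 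Put directly, I would try to prove that if $\mathcal C$ contains sets through every point, then one can extend a minimal cover to one of size exactly $d+1$ by a shifting/exchange argument. Replacing one cover element by two sets that split its private part is the step that raises the cover size by one.

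The main obstacle will be this exchange step near the boundary $v=(d+1)k/d$, where sets are large relative to $v$ and minimal covers of size $d+1$ are tight. There I would first apply the $(i,j)$-shift, which preserves simplex-freeness in the complement formulation up to standard care (as in Frankl's compression proofs). This reduces the problem to shifted families, where minimal covers can be written down explicitly from initial segments. Then I would count, via a Kruskal--Katona/Lov\'asz estimate, the sets forced missing when some member contains the point $1$. Uniqueness follows by tracking equality: equality forces $\mathcal C(x)=\varnothing$ for some $x$, and then $|\mathcal C(\bar x)|=\binom{v-1}{m}$ forces $\F=\mathcal S_x$. Since shifting can destroy uniqueness information, I would finish by undoing the shifts with the standard argument that a family which shifts to a full star and is simplex-free must itself be a full star.
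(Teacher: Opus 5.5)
Your reduction to $d\ge3$, $(d+1)k/d\le v\le 2k-d+1$ is correct, and so is the translation into families $\mathcal C$ of $m$-sets with no irredundant $(d+1)$-member cover of $V$. Your base case is exactly Frankl's theorem. The gap is everything after that. The claim that some point $x$ has $\mathcal C(x)=\varnothing$ whenever $|\mathcal C|\ge\binom{v-1}{m}$ is not a step toward the theorem; it \emph{is} the theorem, including its equality clause. The mechanism you offer does not supply the missing argument. The statement ``if $\mathcal C$ has sets through every point, a minimal cover extends to an irredundant $(d+1)$-cover'' is false unless the size of $\mathcal C$ is used. You never explain how $|\mathcal C|\ge\binom{v-1}{m}$ produces two members that split the private part of some $B_i$ while every other $B_j$ keeps a private point. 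The announced induction on $d$ is also never set up: there is no link and no inductive hypothesis.

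The step you rely on to make this tractable fails outright. $(i,j)$-shifts preserve $(d+1)$-wise intersection, which is why Frankl's proof works, but they do not preserve simplex-freeness. Already for triangles with $k=3$, $v=8$, the family $\{1,2,5\},\{1,4,6\},\{2,3,7\},\{3,4,8\}$ is triangle-free, since its intersection graph is a $4$-cycle. The shift replacing $3$ by $2$ turns $\{3,4,8\}$ into $\{2,4,8\}$, and $\{1,2,5\},\{1,4,6\},\{2,4,8\}$ is a triangle. So the reduction to shifted families, the Kruskal--Katona count, and the ``undo the shifts'' uniqueness argument have no foundation.

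The paper avoids compression altogether. With $r=v-k$, it builds a global irredundant $(d+1)$-cover from local covers on the blocks of a uniformly random ordered partition of $V$. The local covers are trivial on blocks of size $r$, come from EKR on blocks of size in $(r,2r]$, and from Chv\'atal's theorem on blocks of size $r+p$. For the missing family $\M$, the identity $\mathbb E\sum_i X_i/\binom{v_i-1}{r-1}=|\M|/\binom{v-1}{r-1}\le1$ shows that failure forces $\M$ to be intersecting of size $\binom{v-1}{r-1}$. The EKR equality case then makes $\M$ a star. This settles $v>dr$ and $v\le 2k-d$. The thin window $2k-d+1\le v\le dr$ is handled by Currier's extension-degree and link estimates plus a rigidity lemma (every member has exactly one uniquely extendible $(k-1)$-subset). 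The single exception $(d,k,v)=(3,5,8)$ is done with two $(4,2)$ blocks.
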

The theorem completes the determination of the maximum size
and the extremal families for every admissible triple $(v,k,d)$,
including the small-ground-set cases left outside the preceding
general results.  
The proof combines a composition lemma for irredundant covers
with local extension-degree estimates. Let
\begin{equation*} 
 r=v-k,\qquad q=d+1.
\end{equation*}
Complementation turns a $d$-simplex into a cover by $q$ sets
of size $r$, each containing a point in no other set of the
cover. Averaging over ordered partitions combines such covers
on disjoint blocks and also determines the equality cases
(Lemma~\ref{lem:composition}). For the remaining parameters,
we apply Currier's auxiliary simplex-cluster and link estimates
\cite{Currier}. At equality, these estimates force each member
to contain a $(k-1)$-subset lying in no other member.
The rigidity consequence of Bollob\'as's theorem recorded in
\cite[Lemma~2.1]{FL} then identifies the star. Together, these
arguments establish the bound and its equality characterization
throughout the remaining range.
The cover-composition method developed here, specifically the use of ordered partition averaging, may also inform structural characterizations for other forbidden-configuration problems in extremal combinatorics.

The rest of the paper is devoted to proving Theorem \ref{thm:main}.
Section~\ref{sec:preliminaries} records the external results
and extension-degree notation used in the proof.
Section~\ref{sec:rigidity} develops the cover-composition
argument and the local estimate, and combines them to complete the proof.

\section{Preliminaries}\label{sec:preliminaries}
We first record three classical intersection theorems.
The EKR theorem provides the local two-member covers and the
equality case in the composition argument; Chv\'atal's theorem
provides the larger local covers; and the triangle theorem
treats $d=2$.
We begin with the EKR theorem \cite{An,EKR,GM}; its formulation
below is also recorded in \cite[Theorem~1]{Currier}.

\begin{lemma}\label{lem:ekr}
Let $V$ be a set of size $v$, let $k\ge1$, and suppose $v\ge2k$.
If $\F\subseteq\binom{V}{k}$ is intersecting, then
$
 |\F|\le\binom{v-1}{k-1}.
$
If $v>2k$, equality holds if and only if
$
\F=\mathcal S_\alpha(V,k) 
   =\{A\in\binom{V}{k}:\alpha\in A\}
$ is the  full star with centre $\alpha$ 
for some $\alpha\in V$.
\end{lemma}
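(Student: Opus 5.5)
The statement is the classical Erd\H{o}s--Ko--Rado theorem. I would prove it by Katona's cyclic-permutation method, which gives both the bound and, with extra care, the equality case. The plan is to double count pairs $(\sigma,A)$, where $\sigma$ is a cyclic ordering of $V$ and $A\in\F$ is an \emph{interval} of $\sigma$, meaning its $k$ elements are consecutive in $\sigma$.

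For the bound, I would first show that when $v\ge2k$, at most $k$ of the $v$ intervals of length $k$ in a fixed cyclic order can belong to an intersecting family. Fix one such interval $A$. Every other interval meeting it starts or ends strictly inside $A$, which gives $k-1$ cut points. For each cut point there are two intervals, one ending just before it and one starting at it, and these two are disjoint because $v\ge2k$. Hence at most one from each pair lies in $\F$, giving at most $1+(k-1)=k$ intervals in total. Each $k$-set is an interval in exactly $k!\,(v-k)!$ of the $(v-1)!$ cyclic orders. Double counting then yields
\[
|\F|\,k!\,(v-k)!\le k\,(v-1)!,
\]
which is equivalent to $|\F|\le\binom{v-1}{k-1}$.

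For the equality case with $v>2k$, equality forces every cyclic order to contain exactly $k$ intervals from $\F$. I would first show that these $k$ intervals share a common element. Suppose $A\in\F$ is an interval, and for each cut point exactly one of the two intervals lies in $\F$. Using $v>2k$, a choice pattern that is not monotone would produce two chosen intervals that are disjoint. So the chosen intervals are exactly the $k$ intervals through some fixed element of the cyclic order, that is, a ``local star''. Next I would transfer this local information to a global centre. One route is to take adjacent transpositions in the cyclic order and check that the local centre is consistent. Alternatively, I would fix $A\in\F$ and show that for every $x\in A$ and $y\notin A$ the set $A\setminus\{x\}\cup\{y\}$ lies in $\F$ whenever the local centre is not $x$. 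Then the sets in $\F$ are exactly those containing a fixed $\alpha$, so $\F=\mathcal S_\alpha(V,k)$.

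I expect the main obstacle to be this globalisation step. The local-star structure in each cyclic order is easy, but showing that the centres agree across all orders needs care. I would handle it by showing that two orders differing by a swap of two adjacent elements must have the same centre, and this swap argument uses $v>2k$ once more. Since the lemma is classical, citing \cite{EKR,GM,An} for the uniqueness part, as the paper does, would also be acceptable.
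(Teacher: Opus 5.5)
\textbf{Comparison with the paper.} The paper does not prove this lemma; it quotes the classical theorem from \cite{EKR,GM,An}. Your cyclic-order double count is therefore a genuinely different, self-contained route. For the bound it is complete and correct as written.

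The local-star step is also right, but the reason should be pinned down. Suppose the choices at the $k-1$ cut points are not all start-type followed by all end-type. Then at two consecutive cut points you choose the interval ending at $a_i$ and the interval starting at $a_{i+2}$. These two are disjoint precisely because $v\ge2k+1$. An arbitrary non-monotone pair need not be disjoint, because of wrap-around: for $k=5$, $v=11$, the intervals ending at $a_1$ and starting at $a_5$ share two points. So the contradiction must come from an adjacent switch.

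\textbf{The gap: globalisation.} This step is asserted, not proved.

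Your second route is circular as stated. The local centre belongs to a cyclic order, not to $A$. Until you know that all centres agree, there is no element of $A$ along which to iterate exchanges. Contrast Lemma~\ref{lem:rigidity}, where the distinguished element is attached to each member.

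Your first route works, provided you only swap adjacent elements $x,y$ that both differ from the centre $c$ of $\sigma$. A swap that moves $c$ leaves the two candidate centres undecided by any unchanged interval, and it is never needed. The argument runs as follows.
\begin{itemize}
\item Put $c$ at position $0$. The $\F$-intervals of $\sigma$ are then exactly $[s,s+k-1]$ for $-k+1\le s\le0$.
\item Swapping positions $p,p+1$ changes only the intervals ending at $p$ or starting at $p+1$. So at most one of the star intervals is destroyed.
\item If $[-k+1,0]$ and $[0,k-1]$ both survive, they are $\F$-intervals of the new order meeting only at $c$. So $c$ is still the centre.
\item If $p=k-1$, the survivors leave positions $-1$ and $0$ as the only candidates. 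Position $-1$ would put $[-k,-1]$ into $\F$. But that interval is unchanged by the swap (this uses $v\ge2k+1$) and misses $c$, so it is not in $\F$.
\item The case $p=-k$ is symmetric, using $[1,k]$.
\end{itemize}
Now fix $c$ and permute the path formed by the other $v-1$ elements using such swaps; this reaches every cyclic order. Hence $c$ is the centre of every order. Every $k$-set containing $c$ is then an $\F$-interval of some order, so $\mathcal S_c\subseteq\F$. Equality follows by cardinality.

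With this argument your proof is self-contained. Without it, falling back on the citation is exactly what the paper does.
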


Chv\'atal proved the following exact result for
$(k-1)$-simplex-free $k$-uniform families
\cite[Theorem, p.~355]{Chvatal}.
\begin{lemma}\label{lem:chvatal}
Let $V$ be a set of size $v$. 
Let $k\ge3$ and $v\ge k+2$.
If $\F\subseteq\binom{V}{k}$ is $(k-1)$-simplex-free, 
then $\vert{}\F\vert{}\le\binom{v-1}{k-1},$ with equality 
if and only if $\F$ is a full star.
\end{lemma}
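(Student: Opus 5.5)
The plan is to argue by induction on $k$, and for fixed $k$ by induction on $v$. The tool is the decomposition of $\F$ at a point $x\in V$ into
\[
\F(\bar x)=\{A\in\F:x\notin A\}\subseteq\binom{V\setminus\{x\}}{k}
\quad\text{and}\quad
\F(x)=\{A\setminus\{x\}:x\in A\in\F\}\subseteq\binom{V\setminus\{x\}}{k-1}.
\]
Pascal's identity $\binom{v-1}{k-1}=\binom{v-2}{k-1}+\binom{v-2}{k-2}$ then suggests bounding the two parts separately. The first part, $\F(\bar x)$, is again $(k-1)$-simplex-free and $k$-uniform on $v-1$ points. So the induction on $v$ bounds it by $\binom{v-2}{k-1}$ whenever $v-1\ge k+2$. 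The boundary case $v=k+2$ has to be handled directly: there complements have size $2$, so $\F$ corresponds to a graph on $k+2$ vertices. A $(k-1)$-simplex then becomes a configuration of $k$ edges covering $V$, each with a private vertex. I would bound this by an explicit graph-theoretic count, using that such a graph must have a vertex cover structure forcing all complements to avoid a common point.

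The link $\F(x)$ is where the hypothesis must be used more cleverly. Adding $x$ to a $(k-2)$-simplex in the link gives sets that all contain $x$, which is not a simplex. So I would not try to inherit simplex-freeness naively. Instead I would first prove a local structural claim.

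\textbf{Claim.} Every $A\in\F$ has a $(k-1)$-subset contained in no other member of $\F$, unless the sets $A\setminus\{a\}\cup\{b_a\}\in\F$ share repeated $b_a$'s.

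The reason is as follows. Suppose every $A\setminus\{a\}$, for $a\in A$, extends to another member $A_a=A\setminus\{a\}\cup\{b_a\}$ of $\F$ with the $b_a$ pairwise distinct. Then the $k$ sets $A_a$ have empty total intersection, while omitting $A_{a_j}$ leaves $a_j$ in the intersection of the others. That is a $(k-1)$-simplex, a contradiction. Using this claim, I would choose $x$ of maximum degree and show that the members not containing $x$ can be charged injectively to $(k-1)$-subsets avoiding $x$ that are missing from the shadow of $\F(x)$. This would give $|\F(\bar x)|\le\binom{v-2}{k-1}-(\text{deficit of }\F(x))$. The intended effect is that the two bounds trade off exactly, in the spirit of a Bollobás-type inequality for families with private $(k-1)$-subsets.

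For equality, the trade-off must be tight at every step. That forces $\F(\bar x)=\varnothing$ and $\F(x)=\binom{V\setminus\{x\}}{k-1}$, which is the full star with centre $x$. At the inductive step I would also use the equality case of the induction hypothesis for $\F(\bar x)$ to rule out the alternative split $|\F(\bar x)|=\binom{v-2}{k-1}$, since a full star on $V\setminus\{x\}$ together with a nonempty link would create a simplex.

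The main obstacle is the charging step: making the private-subset claim quantitative enough to offset the link exactly. The difficulty is the degenerate case where the $b_a$ repeat. There I would first try an extra simplex search: replace one $A_a$ by a set obtained from a second extension, which should still produce $k$ sets with the private-point property. Failing that, I would fall back on the cover formulation via complements, where a $(k-1)$-simplex is $k$ sets of size $v-k$ covering $V$, each with a private point. In that formulation I would run a direct counting argument on covers.
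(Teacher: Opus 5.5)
The paper does not prove this lemma; it quotes it from Chv\'atal's paper. Taken on its own, your proposal has a genuine gap, located exactly at the charging step you flag as the main obstacle.

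\textbf{The bad case is a simplex-free clique.} Your Claim is sharper than you state it. The sets $A\setminus\{a\}\cup\{b_a\}$ already form a $(k-1)$-simplex as soon as the $b_a$ are \emph{not all equal}; pairwise distinctness is not needed. So a member $A$ with no uniquely extendible $(k-1)$-subset must lie in a $(k+1)$-set $Y=A\cup\{b\}$ with $\binom{Y}{k}\subseteq\F$. Moreover, every $(k-1)$-subset of $Y$ then has degree exactly $2$.

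Such a ``clique'' contains no simplex, since any $k$ of its members share the remaining point of $Y$. Hence no ``extra simplex search'' can eliminate it. It is also precisely where the hypothesis $v\ge k+2$ must enter: for $v=k+1$, the family $\binom{V}{k}$ is one clique with $k+1>\binom{k}{k-1}$ members, so the bound fails there. Nothing in your charging step uses $v\ge k+2$.

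\textbf{What your sketch actually proves.} Take any $x$. A member $B\in\F(\bar x)$ with a private $(k-1)$-subset $T_B$ can be charged by $B\mapsto T_B$, which lands in $\binom{V\setminus\{x\}}{k-1}\setminus\F(x)$ injectively. Members of cliques $Y\not\ni x$ can also be charged injectively, to the degree-$2$ subsets of $Y$ (e.g.\ $Y\setminus\{y_i\}\mapsto Y\setminus\{y_i,y_{i+1}\}$ in a cyclic order of $Y$). The inequality you need is exactly $|\F(\bar x)|\le\binom{v-1}{k-1}-|\F(x)|$, and this is the whole bound. Consequently the induction on $v$, Pascal's identity and the maximum-degree choice of $x$ play no role.

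The only uncharged members are $Y\setminus\{x\}$ for cliques $Y\ni x$. Your argument therefore yields only $|\F|\le\binom{v-1}{k-1}+c(x)$, where $c(x)$ is the number of cliques through $x$.

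\textbf{What is missing.} Removing this excess when every point lies on a clique is the idea you still need. For each clique $Y\ni x$ you must produce an extra $(k-1)$-set that is missing from $\F(x)$ and unused by other charges. One natural source is the constraint simplex-freeness places on the $k$-sets $(Y\setminus\{y_1,y_2\})\cup\{c\}$ with $c\notin Y$. These sets exist only because $v\ge k+2$.

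\textbf{Equality case.} Your equality argument is also unsupported: tightness of a bijective charging does not by itself force $\F(\bar x)=\varnothing$. In the clique-free case, equality follows from the equality case of Bollob\'as's theorem (compare Lemma~\ref{lem:rigidity}). The clique case again needs separate treatment.
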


Another fundamental exact case concerns $2$-simplices. When the forbidden configuration is a triangle, the full star is again the unique extremal structure \cite[Theorem~2, the case $d=2$]{MV}.
\begin{lemma}\label{lem:triangle}
Let $V$ be a set of size $v$.
Let $k\ge3$ and $v\ge3k/2$.
If $\F\subseteq\binom{V}{k}$ is  $2$-simplex-free, then
$
 |\F|\le\binom{v-1}{k-1},
$
with equality if and only if $\F$ is a full star.
\end{lemma}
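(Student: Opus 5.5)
The lemma is the case $d=2$ of \cite[Theorem~2]{MV}, so in the paper it is simply invoked. If I had to reprove it, I would follow the strategy the introduction announces for the general case: show that every member of an extremal triangle-free family has a private $(k-1)$-subset, then apply a Bollob\'as-type bound and its rigidity.

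First I would reduce to families that are not pairwise intersecting. If $v\ge 2k$ and $\F$ is intersecting, Lemma~\ref{lem:ekr} gives the bound. For $v>2k$ it also gives the equality case. The boundary case $v=2k$ is not covered by Lemma~\ref{lem:ekr}'s uniqueness statement, and the non-star intersecting families there would have to be excluded by a direct check that they contain a triangle. If $v<2k$, pairwise intersection is automatic, and then the triangle condition is the real restriction. So the substantive work lies in the structure of pairwise intersecting triangle-free families.

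The key step is a local statement. Suppose $A\in\F$ and every $(k-1)$-subset $A\setminus\{x\}$ lies in some other member $B_x=A\setminus\{x\}\cup\{y_x\}$. I would try to produce a triangle from such extensions. For $x\ne x'$, the sets $A$, $B_x$, $B_{x'}$ intersect pairwise. Their common intersection is empty only in degenerate situations, so instead I would pair the extensions with a third member disjoint from $A\cap B_x\cap B_{x'}$. The hypothesis $v\ge 3k/2$ should supply this third member through counting: complements have size $r=v-k\le k/2\cdot$const, so there is room to cover the intersection. I expect this to be the main obstacle. Handling the boundary cases when $v$ is close to $3k/2$ needs an averaging argument over the extension degrees, and this is exactly where Mubayi--Verstra\"ete's argument is delicate.

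Once every member has a private $(k-1)$-subset, I would bound $|\F|$ as follows. A Bollob\'as/LYM-type inequality for $k$-sets with private $(k-1)$-shadows, combined with a Kruskal--Katona estimate on the shadow inside the link of a maximum-degree vertex, gives $|\F|\le\binom{v-1}{k-1}$. At equality, the rigidity consequence of Bollob\'as's theorem in \cite[Lemma~2.1]{FL} forces all members to share a point, so $\F$ is a full star.
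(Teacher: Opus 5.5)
Invoking the case $d=2$ of \cite[Theorem~2]{MV} is exactly what the paper does: Lemma~\ref{lem:triangle} is imported, not proved, and that citation fully justifies it. The self-contained reproof you then outline is not a proof, however, and its central step cannot work in the form you describe.

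You propose a local implication: if every $(k-1)$-subset $A\setminus\{x\}$ of a member $A$ extends to another member, then a triangle appears. No such implication holds. For $k\ge3$ and any $(k+1)$-set $S\subseteq V$, the family $\binom{S}{k}$ is triangle-free, because any three distinct members $S\setminus\{a\}$, $S\setminus\{b\}$, $S\setminus\{c\}$ share the $k-2\ge1$ points of $S\setminus\{a,b,c\}$. Yet every $(k-1)$-subset of every member has extension degree $2$. So a uniquely extendible $(k-1)$-subset in each member can only come from the global hypothesis $|\F|\ge\binom{v-1}{k-1}$, through some counting argument that uses $v\ge3k/2$. Note that this hypothesis gives the lower bound $2r\ge k$ on complement size, not an upper bound as your sketch suggests. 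Supplying that global argument is precisely the hard content of Mubayi--Verstra\"ete's theorem, and your sketch leaves it as ``the main obstacle.''

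This is also why the paper cites this case rather than reproving it. Its own extension-degree route, Lemma~\ref{lem:local}, needs $\frac{r-1}{2r}>\frac1d$ to absorb the $p_2$ terms, and that inequality is impossible when $d=2$.

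Two smaller points. First, once private subsets are available, no Kruskal--Katona step is needed. Bollob\'as's inequality applied to the pairs $(T_A,V\setminus A)$, with $T_A$ the private $(k-1)$-subset of $A$, gives $|\F|\le\binom{v-1}{k-1}$ directly, and \cite[Lemma~2.1]{FL} then handles equality. Second, your preliminary reduction is inconsistent. For $v\ge2k$ the intersecting families are the easy case by Lemma~\ref{lem:ekr}, and the substantive ones are the non-intersecting families. The $v=2k$ equality case is also deferred to a ``direct check'' that is never carried out.
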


Only the specialization in Lemma~\ref{lem:triangle} is used.
For $d>2$, the cited general theorem forbids every pairwise
intersecting subfamily of $d+1$ members with empty total
intersection, a stronger restriction than being $d$-simplex-free.
We need more detailed information about how a
set can be extended within the family. For
$\F\subseteq\binom{V}{k}$ and $T\subseteq V$, define the
\emph{extension degree}
\begin{equation*}
 \deg_\F(T)=|\{A\in\F:T\subseteq A\}|.
\end{equation*}
A $(k-1)$-subset $T$ is \emph{uniquely extendible} in $\F$ when
$\deg_\F(T)=1$. If $v=|V|$, then
$0\le\deg_\F(T)\le v-k+1$ for every such $T$.
For $A\in\F$ and $i\ge1$, let 
\begin{equation}\label{eq:extension-classes}
 P_i^\F(A)
   =\{\alpha\in A:\deg_\F(A\setminus\{\alpha\})=i\},
 \qquad p_i^\F(A)=|P_i^\F(A)|.
\end{equation}
The sets $P_i^\F(A)$ correspond to  
$\alpha_\F^i(A)$ in \cite[Definition~3]{Currier};
we call them extension classes and write $p_i^\F(A)$
for their cardinalities. 
When $\F$ is fixed, we abbreviate these symbols to $P_i(A)$ and
$p_i(A)$. The sets $P_i(A)$ are pairwise disjoint subsets of $A$.
A \emph{$d$-simplex-cluster} in $\binom{V}{k}$
is a $d$-simplex $\{A_1,\ldots,A_{d+1}\}$ satisfying
$
\left|\bigcup_{j=1}^{d+1} A_j\right|\le 2k;
$
see \cite[Definition~2]{Currier}.
In particular, every $d$-simplex-free family is
$d$-simplex-cluster-free.

The next four lemmas are auxiliary results from \cite{Currier}. 
The first  turns flexibility of extension into a forbidden
simplex-cluster \cite[Lemma~1]{Currier}. 
We will use its contrapositive to mark the members
of a link that may participate in a disjoint pair.

\begin{lemma}\label{lem:cluster-criterion}
Let $V$ be a set of size $v$.
Let $d\ge2$, $d+1\le k$,  $v\ge2k-d$, and let
$\F\subseteq\binom{V}{k}$.
If there exist $A,B\in\F$ such that
\[
 A\cap B\in
 \binom{A\setminus P_1(A)}{d}\setminus\binom{P_2(A)}{d},
\]
then $\F$ contains a $d$-simplex-cluster.
\end{lemma}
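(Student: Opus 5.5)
The plan is to build the simplex-cluster explicitly from $A$, $B$ and one-point modifications of $A$. Write $S=A\cap B=\{s_1,\ldots,s_d\}$.

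\textbf{Step 1: neighbours of $A$.} Since $A$ itself contains $A\setminus\{s_i\}$, we have $\deg_\F(A\setminus\{s_i\})\ge1$. The hypothesis $s_i\notin P_1(A)$ upgrades this to $\ge2$. So for each $i$ there is $A_i\in\F$ with $A_i\ne A$ and $A\setminus\{s_i\}\subseteq A_i$, that is, $A_i=(A\setminus\{s_i\})\cup\{x_i\}$ with $x_i\notin A$. Since $S\not\subseteq P_2(A)$, some index $j$ has $\deg_\F(A\setminus\{s_j\})\ge3$. For that index there are two admissible choices of $x_j$. We use this freedom to arrange that the $x_i$ are not all equal: fix $A_i$ for $i\ne j$, then pick $x_j$ different from $x_{i_0}$ for some fixed $i_0\ne j$. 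Such an $i_0$ exists because $d\ge2$.

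\textbf{Step 2: the candidate simplex.} Take $\{B,A_1,\ldots,A_d\}$, which has $d+1$ members.
\begin{itemize}
\item The $A_i$ are distinct: for $i\ne l$, $A_l$ contains $s_i$ while $A_i$ does not.
\item Each $A_i$ differs from $B$: $|A_i\cap B|\le d<k$.
\item Total intersection. Since $B\cap A=S$, any common element of $B$ and all $A_i$ lies either in $S$ or among the $x_i$. No $s_i$ works, because $s_i\notin A_i$. A new point would have to equal every $x_i$, which Step 1 prevents. Hence the total intersection is empty.
\item Omitting $B$: $\bigcap_i A_i\supseteq A\setminus S\neq\varnothing$, since $k>d$.
\item Omitting $A_l$: $B\cap\bigcap_{i\ne l}A_i\ni s_l$.
\end{itemize}
So this is a $d$-simplex.

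\textbf{Step 3: the cluster condition.} The union is contained in $A\cup B\cup\{x_1,\ldots,x_d\}$. It therefore has size at most $(2k-d)+d=2k$, so the simplex is a $d$-simplex-cluster. The hypothesis $v\ge 2k-d$ only guarantees that such configurations can exist and is not otherwise needed.

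The only delicate point is the emptiness of the total intersection, where a single common new point $x$ lying in $B$ could spoil it. This is exactly where the degree-$\ge3$ hypothesis ($S\not\subseteq P_2(A)$) is used, and it is the step that requires care.
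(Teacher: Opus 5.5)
Your proof is correct and complete. The paper gives no proof of this lemma; it quotes it from Currier's Lemma~1, so your argument serves as a self-contained replacement for that citation rather than an alternative to an in-paper proof.

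Your construction works as follows. Replace each point $s_i$ of $S=A\cap B$ by an outside point $x_i$ to obtain $A_i\in\F\setminus\{A\}$. Use the point of degree at least $3$ to make the $x_i$ not all equal. Then take $\{B,A_1,\ldots,A_d\}$. This checks every requirement:
\begin{itemize}
\item Distinctness: $s_i\in A_l\setminus A_i$ for $l\ne i$, and $|A_i\cap B|\le d<k$.
\item Empty total intersection: a common point inside $A$ would lie in $S$ but avoid every $s_i$, and a common point outside $A$ would equal every $x_i$.
\item Nonempty $d$-wise intersections: these are witnessed by $A\setminus S$ and by $s_l$.
\item Cluster bound: the union has size at most $|A\cup B|+d=2k$.
\end{itemize}

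Beyond the citation, your write-up shows exactly where each hypothesis enters. The condition $d\ge2$ gives an index $i_0\ne j$, $k\ge d+1$ makes $A\setminus S$ nonempty, and excluding $\binom{P_2(A)}{d}$ supplies the second choice of $x_j$. You also correctly observe that $v\ge 2k-d$ is automatic once $A,B\in\F$ with $|A\cap B|=d$ exist.
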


The next estimate limits the total contribution from extension
degrees one and two. It requires a lower bound on $|\F|$, but no
forbidden-configuration assumption \cite[Lemma~2]{Currier}.

\begin{lemma}\label{lem:extension-budget}
Let $1\le k<v$ and let $\F\subseteq\binom{V}{k}$ satisfy
$|\F|\ge\binom{v-1}{k-1}$. Then
\[
 \sum_{A\in\F}
 \left(p_1(A)+\frac{v-k-1}{2(v-k)}p_2(A)\right)
 \le\binom{v-1}{k-1}.
\]
\end{lemma}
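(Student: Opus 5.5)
The plan is to double count over $(k-1)$-subsets of $V$, grouped by their extension degree. Put $r=v-k\ge1$, $m=|\F|$, $N=\binom{v}{k-1}$. For $i\ge0$ let $n_i$ be the number of $T\in\binom{V}{k-1}$ with $\deg_\F(T)=i$. As noted after \eqref{eq:extension-classes}, $0\le\deg_\F(T)\le r+1$. So $n_i=0$ for $i>r+1$, and $\sum_{i=0}^{r+1}n_i=N$.

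\emph{Rewrite the left-hand side.} Each pair $(A,\alpha)$ with $A\in\F$ and $\alpha\in P_i(A)$ corresponds to the pair $(T,A)$ with $T=A\setminus\{\alpha\}$, $\deg_\F(T)=i$ and $T\subseteq A$. Each $T$ of degree $i$ arises from exactly $i$ members $A$. Hence $\sum_{A\in\F}p_i(A)=i\,n_i$. The quantity to bound is therefore
\[
 n_1+\frac{r-1}{2r}\cdot 2n_2 \;=\; n_1+\frac{r-1}{r}\,n_2 .
\]

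\emph{Count incidences.} Counting pairs $(T,A)$ with $T\subseteq A\in\F$ and $|T|=k-1$ gives $km=\sum_i i\,n_i$. Bound each degree $i\ge3$ by $r+1$ (this term is vacuous when $r+1<3$) and use $n_0\ge0$:
\[
 km\le n_1+2n_2+(r+1)(N-n_1-n_2).
\]
Rearranging gives $r\,n_1+(r-1)\,n_2\le (r+1)N-km$. Dividing by $r$:
\[
 n_1+\frac{r-1}{r}\,n_2\le\frac{(r+1)N-km}{r}.
\]

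\emph{Use the size hypothesis.} The identity $(v-k+1)\binom{v}{k-1}=k\binom{v}{k}=v\binom{v-1}{k-1}$ gives $(r+1)N=v\binom{v-1}{k-1}$. Combined with $m\ge\binom{v-1}{k-1}$, the right-hand side is at most
\[
 \frac{(v-k)\binom{v-1}{k-1}}{r}=\binom{v-1}{k-1},
\]
which is the claimed inequality.

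The only step needing care is the incidence inequality. Its weights $1$ and $2$ on $n_1,n_2$, compared against the maximum degree $r+1$, are exactly what produce the coefficient $\frac{r-1}{2r}$ once the factor $i$ from $\sum_A p_i(A)=i\,n_i$ is taken into account. Everything else is routine arithmetic with binomial identities, and no forbidden-configuration hypothesis is used, consistent with the statement.
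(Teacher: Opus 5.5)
Your proof is correct. The paper does not prove this lemma at all; it imports it from Currier's Lemma~2, so your double count supplies a self-contained replacement.

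It is in fact the same count the paper itself runs in the proof of Lemma~\ref{lem:rigidity}. With $\Nn=\binom{V}{k}\setminus\F$ we have $(r+1)N-km=\sum_{T}\bigl(r+1-\deg_\F(T)\bigr)=k|\Nn|$. So your incidence inequality says $k|\Nn|\ge r\,n_1+(r-1)\,n_2$. The size hypothesis says $k|\Nn|\le k\binom{v-1}{k}=r\binom{v-1}{k-1}$.

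Beyond self-containment, your route makes the equality case explicit. Equality in the budget forces $|\F|=\binom{v-1}{k-1}$ and $n_i=0$ for every $i\notin\{1,2,r+1\}$. In Lemma~\ref{lem:local}, equality in \eqref{eq:localbudget} and $p_2(A)=0$ (that is, $n_2=0$) are both established. Your argument then already gives the degree dichotomy \eqref{eq:dichotomy}, which the paper re-derives inside Lemma~\ref{lem:rigidity}.
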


The subsequent argument will sum over $d$-subsets, whereas the preceding estimate counts individual elements. The following lemma connects these two levels of counting \cite[Lemma~3]{Currier}.
\begin{lemma}\label{lem:marked-subsets}
Let $d\ge2$, $d+1\le k<v$, and let
$\F\subseteq\binom{V}{k}$. Then
\[
 \sum_{A\in\F}
 \left[\binom{k}{d}-\binom{k-p_1(A)}{d}+\binom{p_2(A)}{d}\right]
 \le\binom{k-1}{d-1}
       \sum_{A\in\F}\left(p_1(A)+\frac{p_2(A)}{d}\right).
\]
\end{lemma}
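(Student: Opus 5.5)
The inequality holds term by term, so I will prove it separately for each member $A\in\F$ and then sum over $\F$. No property of $\F$ beyond the fact that $P_1(A)$ and $P_2(A)$ are disjoint subsets of $A$ will be needed; in particular $0\le p_1(A)\le k$, $0\le p_2(A)\le k$, and the simplex and size hypotheses play no role. Fix $A$ and write $p_1=p_1(A)$ and $p_2=p_2(A)$. It suffices to establish the two separate bounds
\[
 \binom{k}{d}-\binom{k-p_1}{d}\le \binom{k-1}{d-1}p_1,
 \qquad
 \binom{p_2}{d}\le \binom{k-1}{d-1}\frac{p_2}{d},
\]
and add them.

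For the first bound, I telescope with Pascal's rule $\binom{m}{d}-\binom{m-1}{d}=\binom{m-1}{d-1}$:
\[
 \binom{k}{d}-\binom{k-p_1}{d}=\sum_{j=0}^{p_1-1}\binom{k-j-1}{d-1}.
\]
Each of the $p_1$ summands is at most $\binom{k-1}{d-1}$, because $\binom{m}{d-1}$ is nondecreasing in $m\ge0$. The sum is empty when $p_1=0$, so the bound also holds then.

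For the second bound, if $p_2=0$ both sides vanish. Otherwise I use the absorption identity $\binom{p_2}{d}=\frac{p_2}{d}\binom{p_2-1}{d-1}$. Since $p_2\le k$, monotonicity gives $\binom{p_2-1}{d-1}\le\binom{k-1}{d-1}$. This yields the second bound and hence the lemma.

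There is no real obstacle here. The only point needing care is the edge cases: a binomial coefficient $\binom{m}{d}$ with $m<d$ is zero, which is consistent with both identities, and the bounds $p_1,p_2\le k$ guarantee that all arguments stay nonnegative.
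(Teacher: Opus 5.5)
Your proof is correct. The inequality does hold member by member: the telescoping bound $\binom{k}{d}-\binom{k-p_1}{d}=\sum_{j=1}^{p_1}\binom{k-j}{d-1}\le p_1\binom{k-1}{d-1}$ and, for $p_2\ge1$, the absorption bound $\binom{p_2}{d}=\frac{p_2}{d}\binom{p_2-1}{d-1}\le\frac{p_2}{d}\binom{k-1}{d-1}$ are both valid. The paper imports this lemma from Currier without proof, but its equality analysis in Lemma~\ref{lem:local} uses exactly your telescoping bound \eqref{eq:pointwise-marking} for each $A$, so your argument is essentially the paper's own pointwise reading of the inequality.
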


The following inequality bounds its size when both members
of every disjoint pair belong to a specified marked subfamily
\cite[Theorem~3, numerical inequality]{Currier}.
The paragraph preceding Theorem~3 in Currier's revised preprint
\cite{CurrierV2} attributes Lemma~\ref{lem:marked-intersection}
to Borg and to Borg--Leader.  We do not
use the equality classification, which requires $v>2k$.

\begin{lemma}\label{lem:marked-intersection}
Let $V$ have size $v$, let $k\ge1$, and suppose $v\ge2k$.
Let $\F^*\subseteq\F\subseteq\binom{V}{k}$ be such that if $A,B\in\F$ and $A\cap B=\varnothing$, then $A,B\in\F^*$.
Then
\begin{equation*} 
 |\F|\le\binom{v-1}{k-1}+\frac{v-k}{v}|\F^*|.
\end{equation*}
\end{lemma}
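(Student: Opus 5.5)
The plan is Katona's cyclic-permutation method. Averaging over cyclic orderings turns the statement into an inequality about a single cyclic ordering of $V$.

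\emph{Reduction to one cycle.} Fix a cyclic ordering $\sigma$ of $V$, and let $\mathcal C_\sigma$ be the $v$ arcs of $k$ consecutive elements. Set $a_\sigma=|\F\cap\mathcal C_\sigma|$ and $b_\sigma=|\F^*\cap\mathcal C_\sigma|$. A fixed $k$-set is an arc of a uniformly random cyclic ordering with probability $v/\binom{v}{k}$. Hence
\[
 \mathbb E\,a_\sigma=\frac{v|\F|}{\binom vk},\qquad
 \mathbb E\,b_\sigma=\frac{v|\F^*|}{\binom vk}.
\]
It therefore suffices to prove, for every $\sigma$,
\[
 a_\sigma\le k+\frac{v-k}{v}\,b_\sigma .
\]
Taking expectations and using $\frac{k}{v}\binom vk=\binom{v-1}{k-1}$ then gives
\[
 |\F|\le \binom{v-1}{k-1}+\frac{v-k}{v}|\F^*|.
\]

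\emph{Per-cycle inequality.} Split into two cases according to whether $\F\cap\mathcal C_\sigma$ contains an arc outside $\F^*$.
\begin{itemize}
\item Suppose every arc of $\F\cap\mathcal C_\sigma$ lies in $\F^*$. Then $a_\sigma=b_\sigma\le v$, and the target reads $\frac{k}{v}b_\sigma\le k$, which holds.
\item Otherwise choose an arc $A\in\F\setminus\F^*$ in $\mathcal C_\sigma$. By hypothesis $A$ is disjoint from no member of $\F$, so every arc of $\F\cap\mathcal C_\sigma$ meets $A$. Apart from $A$, the arcs meeting $A$ number $2k-2$. As in Katona's proof, they fall into $k-1$ pairs: for each of the $k-1$ internal "cut points" of $A$, pair the arc ending just before the cut with the arc starting just after it. The two arcs in each pair are disjoint, since $v\ge 2k$.
\end{itemize}

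\emph{Counting in the second case.} Let $t$ be the number of pairs both of whose arcs lie in $\F$. By hypothesis both arcs of such a pair lie in $\F^*$, so $b_\sigma\ge 2t$, because $A\notin\F^*$ and the pairs are disjoint. Every other pair contributes at most one arc to $\F$. Hence
\[
 a_\sigma\le 1+(k-1)+t=k+t.
\]
It remains to check $t\le\frac{v-k}{v}b_\sigma$. Since $b_\sigma\ge 2t$, this follows from $\frac{2(v-k)}{v}\ge1$, which is exactly $v\ge 2k$.

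The main obstacle is the per-cycle inequality: finding the right per-cycle normalization and checking that Katona's pairing survives the presence of marked arcs. Once the pairing around a single unmarked arc is in place, the rest is routine bookkeeping, and the averaging step is standard.
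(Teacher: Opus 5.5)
Your proof is correct and complete. The averaging identity $\Pr(F\in\mathcal C_\sigma)=v/\binom vk$ is right. The $2k-2$ arcs other than $A$ that meet an unmarked arc $A$ are distinct and split into $k-1$ disjoint pairs, precisely because $v\ge 2k$. The per-cycle bound $a_\sigma\le k+\frac{v-k}{v}b_\sigma$ then holds in both of your cases.

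The paper gives no proof of this lemma. It imports the inequality from Currier's Theorem~3, which is attributed there to Borg and Borg--Leader; that theorem also carries an equality classification for $v>2k$, which the paper explicitly does not use. So your argument replaces a black-box citation with a self-contained Katona-cycle proof. Note that your per-cycle inequality can be rewritten as $(a_\sigma-b_\sigma)+\frac kv b_\sigma\le k$, which averages to the weighted form $|\F\setminus\F^*|+\frac kv|\F^*|\le\binom{v-1}{k-1}$.

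Your version gains transparency. It handles the boundary case $v=2k$ on the same footing; Lemma~\ref{lem:local} as stated invokes that case when $v=2k-d$. It also shows where the constant $\frac{v-k}{v}$ comes from. That constant is forced only by fully marked cycles, i.e.\ your first case with $b_\sigma=v$. A cycle containing an unmarked arc satisfies the stronger bound $a_\sigma\le k+\frac12 b_\sigma$.

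The citation gains brevity and the equality characterization. Extracting that characterization from your argument would take extra work, but the paper never needs it.
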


\section{Proof of Theorem \ref{thm:main}}
\label{sec:rigidity}
We first transform the forbidden simplex condition into an equivalent 
covering problem by passing to the complementary family.
Let $V$ be a set having size $v$.
 For
$\F\subseteq\binom{V}{k}$, let
\[
 \G=\{V\setminus A:A\in\F\}\subseteq\binom{V}{r},
 \qquad r=v-k.
\]
An \emph{irredundant cover} of $V$ is a family of subsets
whose union is $V$ and in which every member has a
\emph{private point}: an element belonging to that member and to
no other member of the cover.
For distinct $A_1,\ldots,A_q\in\F$,  let 
$A_i^c=V\setminus A_i$. De Morgan's laws give
$
 \bigcap_{i=1}^q A_i=\varnothing
~\hbox{if and only if}~
 \bigcup_{i=1}^q A_i^c=V.
$
When these conditions hold, for every $i$ we also have
\[
A_i^c\setminus\bigcup_{j\ne i}A_j^c
   =\bigcap_{j\ne i}A_j.
\]
Thus the remaining simplex conditions say exactly that every
$A_i^c$ has a private point. Complementation therefore gives an
exact correspondence between $(q-1)$-simplices in $\F$ and
irredundant $q$-member covers of $V$ in $\G$.

Irredundant covers on disjoint blocks can be combined, since their
private points remain private. To formulate the local assertions
that will be combined, for $v\ge r\ge2$ and $q\ge1$ let
$\cov_r(v,q)$ denote the following \emph{strict} assertion:
every $r$-uniform family on a $v$-element set with more than
$\binom{v-1}{r}$ members contains an irredundant cover of that
set with exactly $q$ members. 
The next lemma shows that strict local assertions already
suffice for a global result with equality.  

\begin{lemma}\label{lem:composition}
Let $r\ge2$, $b\ge2$, $v_i\ge r$, and $q_i\ge1$ for
$1\le i\le b$. Suppose that every $\cov_r(v_i,q_i)$ holds.
Set $v=\sum_i v_i$ and $q=\sum_i q_i$, and assume $v>2r$.
If $\G\subseteq\binom{V}{r}$ has at least
$\binom{v-1}{r}$ members, then it has an irredundant
$q$-member cover unless
\begin{equation}\label{eq:isolatedcomplete}
 \G=\binom{V\setminus\{\alpha\}}{r}
 \quad\text{for some }\alpha\in V.
\end{equation}
\end{lemma}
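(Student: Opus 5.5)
The plan is to average over random ordered partitions of $V$ into blocks whose sizes match the local assertions. The starting observation is that local covers glue. Fix an ordered partition $V=V_1\cup\cdots\cup V_b$ with $|V_i|=v_i$, and put $\G_i=\G\cap\binom{V_i}{r}$. Suppose that $|\G_i|>\binom{v_i-1}{r}$ for every $i$. Then $\cov_r(v_i,q_i)$ gives an irredundant $q_i$-member cover $\mathcal C_i\subseteq\G_i$ of $V_i$. The union $\bigcup_i\mathcal C_i$ covers $V$ and has $q$ members, which are distinct because the blocks are disjoint. A private point of a member of $\mathcal C_i$ lies in $V_i$, and no member of $\mathcal C_j$ with $j\ne i$ meets $V_i$, so that point stays private. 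Hence the union is an irredundant $q$-member cover. So if $\G$ has no such cover, then \emph{every} ordered partition has some block with $|\G_i|\le\binom{v_i-1}{r}$.

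Next we normalise and take expectations. Write $y_i=1-|\G_i|/\binom{v_i}{r}$ for the missing fraction in block $i$. Since $\binom{v_i-1}{r}/\binom{v_i}{r}=1-r/v_i$, the bad block satisfies $y_iv_i/r\ge1$. All $y_i\ge0$, so for every partition
\[
 \sum_{i=1}^b\frac{v_i}{r}\,y_i\ge 1 .
\]
Now choose the ordered partition uniformly at random among those with the prescribed block sizes. Each $V_i$ is then a uniformly random $v_i$-subset, so $\mathbb E[y_i]=y:=1-|\G|/\binom{v}{r}$. The hypothesis $|\G|\ge\binom{v-1}{r}$ gives $y\le r/v$. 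Taking expectations yields
\[
 1\le\sum_i\frac{v_i}{r}\,y=\frac{v}{r}\,y\le1 .
\]
Hence equality holds throughout. In particular $|\G|=\binom{v-1}{r}$, and for every partition the sum $\sum_i v_iy_i/r$ equals exactly $1$. Because one term is already at least $1$, every other block has $y_j=0$; that is, $\binom{V_j}{r}\subseteq\G$ for all $j$ other than the bad block.

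For the structure, let $\Nn=\binom{V}{r}\setminus\G$ be the family of missing $r$-sets, so $|\Nn|=\binom{v}{r}-\binom{v-1}{r}=\binom{v-1}{r-1}$. Suppose $M,M'\in\Nn$ were disjoint. Since $b\ge2$, $v_1,v_2\ge r$, and $v\ge|M\cup M'|$, we can build an ordered partition with $M\subseteq V_1$ and $M'\subseteq V_2$. Then $y_1>0$ and $y_2>0$, which contradicts the conclusion that at most one block is deficient. Hence $\Nn$ is intersecting. Because $v>2r$, Lemma~\ref{lem:ekr} applies to $\Nn$ viewed as $r$-sets, and equality in its bound forces $\Nn=\mathcal S_\alpha(V,r)$ for some $\alpha$. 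Therefore $\G=\binom{V\setminus\{\alpha\}}{r}$, which is \eqref{eq:isolatedcomplete}.

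I expect the main obstacle to be the bookkeeping in the averaging step. The obvious averaging of $|\G_i|$ alone is too weak. The weights $v_i/r$ have to be chosen so that the per-partition inequality and the expectation meet exactly at $|\G|=\binom{v-1}{r}$; this is what forces every non-bad block to be complete. Once that is in place, the equality case reduces cleanly to EKR on the complementary missing family.
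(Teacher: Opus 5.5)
Your proof is correct and is essentially the paper's argument. Your normalized quantity $\frac{v_i}{r}y_i$ equals the paper's $X_i/\binom{v_i-1}{r-1}$, where $X_i$ counts the missing $r$-sets inside $V_i$, so you average the same random variable over uniformly random ordered partitions, force it to equal $1$ on every partition, and finish exactly as the paper does by placing two disjoint missing sets in different blocks and invoking the EKR equality case.
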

\begin{proof}
Let $\M=\bin{V}r\setminus\G$ and $m=|\M|$,  
so $m\le \bin{v-1}{r-1}$.
Take a uniformly random ordered partition $(V_1,\ldots,V_b)$
of $V$ with $|V_i|=v_i$. Set
\[
 X_i=\left|\M\cap\bin{V_i}r\right|,
 \qquad d_i=\bin{v_i-1}{r-1}>0,
 \qquad S=\sum_{i=1}^b\frac{X_i}{d_i}.
\]
If $X_i<d_i$, then we have
\[
 \left|\G\cap\bin{V_i}r\right|
 =\bin{v_i}r-X_i
 >\bin{v_i-1}r.
\]
Consequently, if every $X_i<d_i$, each assertion $\cov_r(v_i,q_i)$ yields a local irredundant cover 
$\mathcal C_i \subseteq \mathcal G \cap \bin{V_i}r$ of size $q_i$. Since the blocks $V_i$ partition $V$, the families $\mathcal C_i$ are pairwise vertex-disjoint. Their union $\bigcup_i \mathcal C_i$ therefore constitutes an irredundant $q$-edge cover of $V$, as both edge distinctness and local private points are strictly preserved.

Suppose no such global cover exists. Every permitted partition then
has some $X_i\ge d_i$, so $S\ge1$. For any fixed $A\in\binom{V}r$, the marginal distribution
of $V_i$ is uniform on $\binom{V}{v_i}$, then
\[
 \Pr(A\subseteq V_i)
 =\frac{\binom{v-r}{v_i-r}}{\binom v{v_i}}
 =\frac{\binom{v_i}r}{\binom vr}.
\]
Summing this probability over the $m$ missing edges yields
\begin{align*}
 \mathbb E S
 &=\sum_i\frac{m\bin{v_i}r}{d_i\bin vr}
 =\frac{m}{\bin vr}\sum_i\frac{v_i}{r}
 =\frac{vm}{r\bin vr}
 =\frac{m}{\bin{v-1}{r-1}}\le1.
\end{align*}
Since $S\ge1$ for every permitted partition and
$\mathbb E S=m/\bin{v-1}{r-1}\le1$, we have $m=\bin{v-1}{r-1}$ and
$\mathbb E(S-1)=0$. The random variable $S-1$ is nonnegative on a
finite probability space in which every permitted partition has
positive probability. Hence $S=1$ for every such partition.
Some block has $X_i\ge d_i$, so its contribution must equal one
and every other contribution must vanish. 
Thus, for every permitted partition, exactly one block satisfies $X_i = d_i$, while $X_j = 0$ for all $j \ne i$.

Suppose for contradiction that there exist $E, F \in \M$ such that $E \cap F = \varnothing$. We can construct a partition $(V_1, \ldots, V_b)$ by embedding $E \subseteq V_1$ and $F \subseteq V_2$, and arbitrarily distributing the remaining $v-2r$ vertices of $V\setminus (E \cup F)$ to satisfy $|V_i|=v_i$. This assignment is valid since $v_1, v_2 \ge r$ and $\sum_i v_i = v$. However, this partition yields $E \in \M \cap \bin{V_1}r$ and $F \in \M \cap \bin{V_2}r$, implying $X_1 \ge 1$ and $X_2 \ge 1$. This contradicts the established property that $X_\ell > 0$ for exactly one index $\ell$. Therefore $\M$ is intersecting. Since
$|\M|=m$ and $v>2r$, the equality case of the
EKR theorem (see Lemma \ref{lem:ekr}) gives
\[
 \M=\{A\in\bin{V}r:\alpha\in A\}
\]
for some $\alpha$. This is equivalent to \eqref{eq:isolatedcomplete}.
\end{proof}

The next step is to supply the explicit local covers. We build these systematically: size-$r$ blocks trivially provide single-set covers, while the existence of larger covers is forced by the strict intersection bounds of Lemmas \ref{lem:ekr} and  \ref{lem:chvatal} applied to the complementary families.

\begin{lemma}\label{lem:blocks}
For every $r\ge2$, the following assertions hold:
\[
 \cov_r(r,1),\qquad
 \cov_r(v,2)\quad(r+1\le v\le2r),\qquad
 \cov_r(r+q,q)\quad(q\ge2).
\]
\end{lemma}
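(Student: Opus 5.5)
We treat the three assertions separately, each time passing back and forth between an $r$-uniform family $\G$ on a $v$-set $V$ and its complementary family $\F=\{V\setminus G:G\in\G\}\subseteq\binom{V}{k}$ with $k=v-r$. By the correspondence established at the start of this section, irredundant $q$-member covers in $\G$ are exactly $(q-1)$-simplices in $\F$.

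\emph{The case $\cov_r(r,1)$.} Here $v=r$ and $\binom{v-1}{r}=\binom{r-1}{r}=0$. So a family with more than $0$ members contains $V$ itself, and $\{V\}$ is an irredundant cover with one member, each point of $V$ being private. This case is immediate.

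\emph{The case $\cov_r(v,2)$ with $r+1\le v\le 2r$.} An irredundant $2$-member cover is a pair $G_1,G_2$ with $G_1\cup G_2=V$, $G_1\ne G_2$ and each having a private point. Since $|G_i|=r<v$, neither set alone covers $V$, so the condition reduces to $G_1\cup G_2=V$. Complementing, this means $A_1\cap A_2=\varnothing$ for distinct members of $\F$, where $k=v-r$ satisfies $1\le k$ and $v\ge 2k$, since $r\ge v/2$. If $\G$ has no such cover, then $\F$ is intersecting, and Lemma~\ref{lem:ekr} gives $|\G|=|\F|\le\binom{v-1}{k-1}=\binom{v-1}{r}$. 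Hence more than $\binom{v-1}{r}$ members force a cover. Only the numerical EKR bound is needed, so the case $v=2k$ causes no trouble.

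\emph{The case $\cov_r(r+q,q)$ with $q\ge2$.} Now $v=r+q$ and $k=q$, and an irredundant $q$-cover in $\G$ corresponds to a $(q-1)$-simplex among $q$-sets, that is, a $(k-1)$-simplex in a $k$-uniform family. For $q\ge3$ we have $v=r+q\ge q+2=k+2$ because $r\ge2$, so Lemma~\ref{lem:chvatal} applies. A family without the simplex therefore has at most $\binom{v-1}{k-1}=\binom{v-1}{r}$ members, which is the strict assertion. The case $q=2$ gives $v=r+2$, and since $r\ge2$ we get $r+1\le v\le 2r$, so it is already contained in the previous case.

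The main point requiring care is the boundary bookkeeping rather than any real obstacle. Specifically, one must check the following: the identity $\binom{v-1}{k-1}=\binom{v-1}{r}$; that $v\ge k+2$ holds for Chv\'atal's hypothesis; that $q=2$ falls into the EKR range; and that the private-point condition is automatic for $2$-covers. One must also confirm that in the simplex correspondence the $q$ members are distinct. This holds because the sets in a $d$-simplex are distinct by definition, and distinct complements correspond to distinct members.
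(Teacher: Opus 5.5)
Your proposal is correct and follows the paper's proof essentially step for step. You use the trivial one-set cover for $\cov_r(r,1)$, the numerical EKR bound on the $(v-r)$-uniform complementary family for $\cov_r(v,2)$, and Chv\'atal's theorem with uniformity $q$ for $q\ge3$, folding the case $q=2$ into the EKR case, exactly as the paper does, including the observation that private points are automatic for two-member covers.
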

\begin{proof}
The first assertion, $\cov_r(r,1)$, is trivial: any family $\F \subseteq \binom{V}{r}$ with $|\F| > \binom{r-1}{r} = 0$ must contain the ground set $V$, which immediately constitutes a $1$-member cover.

To establish $\cov_r(v,2)$ for $r+1 \le v \le 2r$, consider a family $\F \subseteq \binom{V}{r}$ satisfying $|\F| > \binom{v-1}{r}$. We pass to the complementary family $\F^c = \{V \setminus E : E \in \F\}$, whose members have uniformity $u = v - r$. The upper bound $v \le 2r$ implies $2u = 2(v-r) \le v$. Thus, the ground set is large enough to apply the EKR theorem (Lemma~\ref{lem:ekr}). Since $|\F^c| = |\F| > \binom{v-1}{r} = \binom{v-1}{u-1}$, the family $\F^c$ exceeds the maximum size for an intersecting family, guaranteeing the existence of two disjoint members $A_1, A_2 \in \F^c$. By De Morgan's laws, their corresponding original sets $E_1, E_2 \in \F$ cover $V$. Since $|E_1| = |E_2| = r$ and $V = E_1 \cup E_2$ has size $v > r$, neither set can cover $V$ alone. Consequently, both $E_1$ and $E_2$ possess private points, forming the required irredundant $2$-member cover.

The third assertion, $\cov_r(r+q,q)$, splits into two cases based on $q$. When $q=2$, the parameter $v = r+2$ satisfies $r+1 \le v \le 2r$ (as $r \ge 2$), so the result follows directly from the preceding $\cov_r(v,2)$ case.

For the final case, assume $q \ge 3$ and let $v = r+q$. Suppose $\F \subseteq \binom{V}{r}$ satisfies $|\F| > \binom{r+q-1}{r}$. Again taking complements, we obtain $\F^c \subseteq \binom{V}{q}$ with $|\F^c| > \binom{r+q-1}{r} = \binom{r+q-1}{q-1}$. Because $r \ge 2$, the ground set size is $v = q+r \ge q+2$. This allows us to invoke Lemma~\ref{lem:chvatal} with uniformity $q$ and simplex dimension $q-1$. Because $|\F^c|$ strictly exceeds the extremal threshold $\binom{v-1}{q-1}$, the family $\F^c$ must contain a $(q-1)$-simplex. By definition, this simplex consists of $q$ sets in $\F^c$ with an empty total intersection, yet every proper subfamily has a non-empty intersection. By De Morgan's laws, the complements of these $q$ sets in $\F$ cover $V$, and omitting any single set from this collection leaves an element of $V$ uncovered. This exact correspondence yields the necessary irredundant $q$-member cover.
\end{proof}

Combining one-member blocks with a single two-member block
gives the strict high-density range, including its equality
case. The numerical bound in this range is the one  obtained in \cite[p.~170]{Frankl1981}.

\begin{corollary}\label{cor:high}
Let $r\ge2$ and $q\ge3$, and suppose
$
 (q-1)r<v\le qr.
$
Every $\G\subseteq\binom{V}{r}$ with at least
$\binom{v-1}{r}$ members contains an irredundant $q$-member
cover unless it has the form \eqref{eq:isolatedcomplete}.
\end{corollary}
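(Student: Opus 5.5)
The plan is to apply Lemma~\ref{lem:composition} with $b=q-1$ blocks. The first $q-2$ blocks will be one-member blocks of size $r$, with $v_i=r$ and $q_i=1$. The last block will be a two-member block of size
\[
 v_{q-1}=v-(q-2)r,
\]
with $q_{q-1}=2$. Then $\sum_i v_i=v$ and $\sum_i q_i=(q-2)+2=q$, as the composition lemma requires.

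The next step is to check the hypotheses of Lemma~\ref{lem:composition}. Since $q\ge3$, we have $b=q-1\ge2$. Every $v_i\ge r$, because $(q-1)r<v$ gives
\[
 v_{q-1}=v-(q-2)r>r.
\]
Moreover $v>(q-1)r\ge2r$, so $v>2r$ holds.

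The local assertions come from Lemma~\ref{lem:blocks}. The assertion $\cov_r(r,1)$ holds for the small blocks. For the last block, the range condition $v\le qr$ gives $v_{q-1}\le qr-(q-2)r=2r$. Together with $v_{q-1}\ge r+1$ from above, this places $v_{q-1}$ in the window $r+1\le v_{q-1}\le 2r$, where Lemma~\ref{lem:blocks} supplies $\cov_r(v_{q-1},2)$.

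With all hypotheses verified, Lemma~\ref{lem:composition} applies to any $\G$ with at least $\binom{v-1}{r}$ members. It gives an irredundant $q$-member cover unless $\G$ has the form \eqref{eq:isolatedcomplete}, which is exactly the statement. The only delicate point is the window check for the two-member block: the strict inequality $(q-1)r<v$ supplies $v_{q-1}\ge r+1$, and $v\le qr$ supplies $v_{q-1}\le 2r$. Both are immediate, so I expect no real obstacle.
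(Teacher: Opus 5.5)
Your proposal is correct and matches the paper's proof step for step. It uses the same $b=q-1$ blocks: $q-2$ with parameters $(r,1)$ and one with $(v-(q-2)r,2)$. It makes the same window check $r+1\le v-(q-2)r\le 2r$ to obtain $\cov_r(v_{q-1},2)$ from Lemma~\ref{lem:blocks}, and the same verification $v>(q-1)r\ge 2r$ before invoking Lemma~\ref{lem:composition}.
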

\begin{proof}
We apply Lemma \ref{lem:composition} by explicitly defining a set of block parameters $(v_i, q_i)$. Let the total number of blocks be $b = q-1$. Since $q \ge 3$, we have at least two blocks ($b \ge 2$). We allocate the parameters as follows: configure $q-2$ blocks with $(v_i, q_i) = (r, 1)$, and assign the remaining vertices and cover requirements to a single final block with parameters $ (v - (q-2)r, 2)$.
We first verify the admissibility of this final block's size. The hypothesis states that $(q-1)r < v \le qr$. Subtracting $(q-2)r$ from all terms in this inequality yields
$
 r < v - (q-2)r \le 2r.
$
Because $v$ and $r$ are integers, the strict lower bound implies $r+1 \le v_{q-1} =v - (q-2)r\le 2r$. 

We now confirm that all prerequisites for Lemma~\ref{lem:composition} are satisfied. The block sizes sum exactly to the total number of vertices, as $\sum_{i=1}^{q-1} v_i = (q-2)r + v - (q-2)r = v$. Similarly, the local covers sum to the required global target $q$. 
For the local assertions, Lemma~\ref{lem:blocks} directly guarantees $\cov_r(r,1)$ for the first $q-2$ blocks. Furthermore, because $r+1 \le v_{q-1} \le 2r$, Lemma~\ref{lem:blocks} also guarantees the assertion $\cov_r(v_{q-1},2)$ for the final block. 

Lastly, the global vertex condition requires $v > 2r$. The hypothesis gives $v > (q-1)r$, and since $q \ge 3$, it naturally follows that $v > 2r$. Having satisfied all conditions, Lemma~\ref{lem:composition} applies, demonstrating that $\G$ must contain an irredundant $q$-member cover unless it takes the form in \eqref{eq:isolatedcomplete}.
\end{proof}

To move below the range of Corollary~\ref{cor:high}, we allow one
block to carry more than two members of the cover. The useful
quantity is the difference between the number of vertices and
the required number of cover members. A block with parameters
$(r,1)$ contributes $r-1$ to this difference, while a block with
parameters $(r+q,q)$ contributes $r$, independently of $q$.
A variable two-member block accounts for the remaining residue.
The following integer decomposition makes this precise.

\begin{lemma}\label{lem:middle}
Let $r\ge2$ and $q\ge3$, and suppose
$
 2r-1\le v-q\le(r-1)q-r.
$
Every $r$-uniform family on $V$ with at least
$\binom{v-1}{r}$ members contains an irredundant $q$-member
cover unless it has the form \eqref{eq:isolatedcomplete}.
\end{lemma}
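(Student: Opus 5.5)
The plan is to apply Lemma~\ref{lem:composition} with an explicit block decomposition built from the three kinds of local assertions supplied by Lemma~\ref{lem:blocks}. We use $a\ge0$ blocks of type $(r,1)$, one block of type $(r+s,s)$ with $s\ge2$, and one two-member block of type $(w,2)$ with $r+1\le w\le 2r$. The vertex and cover-member counts must add up correctly:
\[
 v=ar+(r+s)+w,\qquad q=a+s+2 .
\]
Equivalently, $v-q=a(r-1)+r+(w-2)$. This matches the bookkeeping described before the lemma: each $(r,1)$ block contributes $r-1$ to $v-q$, the $(r+s,s)$ block contributes $r$, and the two-member block contributes the residue $w-2\in[r-1,2r-2]$.

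To choose the parameters, set $N=v-q-r$. The hypothesis $v-q\ge 2r-1$ gives $N\ge r-1$. Let $a\ge0$ be the largest integer with $N-a(r-1)\ge r-1$, and put $t=N-a(r-1)$. By maximality, $r-1\le t\le 2r-3$. Now set $w=t+2$, so that $r+1\le w\le 2r-1$ and $\cov_r(w,2)$ holds by Lemma~\ref{lem:blocks}. Finally set $s=q-a-2$, which forces the identities above to hold exactly.

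The only point needing care, which I expect to be the main obstacle, is checking that $s\ge2$. This is where the upper hypothesis $v-q\le (r-1)q-r$ is used. Since $a(r-1)\le N-(r-1)=v-q-2r+1$, the hypothesis gives
\[
 a(r-1)\le (r-1)q-3r+1=(r-1)(q-3)-2<(r-1)(q-3).
\]
Hence $a\le q-4$, and so $s=q-a-2\ge2$. Lemma~\ref{lem:blocks} then supplies $\cov_r(r+s,s)$ for this block. If $s=2$, this block is simply another two-member block with $v_i=r+2\le 2r$, which is also covered by Lemma~\ref{lem:blocks}. The $(r,1)$ blocks are covered by $\cov_r(r,1)$.

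It remains to verify the hypotheses of Lemma~\ref{lem:composition}. The number of blocks is $b=a+2\ge2$. Every block has $v_i\ge r$. The block sizes sum to $v$ and the cover sizes sum to $q$ by construction. Moreover, $v\ge q+2r-1>2r$. Lemma~\ref{lem:composition} therefore shows that every $\G\subseteq\binom{V}{r}$ with at least $\binom{v-1}{r}$ members contains an irredundant $q$-member cover unless $\G$ has the form \eqref{eq:isolatedcomplete}.
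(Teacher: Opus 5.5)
Your proposal is correct and is essentially the paper's argument up to reparametrization. Your $a$ is the paper's $h$ (the quotient of $v-q-(2r-1)$ by $r-1$), your $w=t+2$ is the paper's $r+1+t$ block, and your $s$ is the paper's $p$, with the bound $a\le q-4$ obtained from the upper hypothesis exactly as in the paper.
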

\begin{proof}
Let $s=v-q$. By Euclidean division, write
\[
 s-(2r-1)=h(r-1)+t,\qquad h\ge0,\qquad 0\le t\le r-2.
\]
The upper bound $v-q\le(r-1)q-r$ gives
$
 h(r-1)\le s-(2r-1)\le(r-1)(q-3)-2.
$
Since $r-1>0$, we obtain $h<q-3$, hence $h\le q-4$.
Thus $p=q-h-2\ge2$.
Use $h$ blocks with parameters $(v_i,q_i)=(r,1)$,
one block with $(v_i,q_i)=(r+1+t,2)$, and one block with
$(v_i,q_i)=(r+p,p)$. When $h=0$, the first collection is empty;
there are still two blocks. The cover sizes and the block sizes
sum, respectively, to
$
 h+2+p=q
$
and
\[
 hr+(r+1+t)+(r+p)
   =h(r-1)+2r-1+t+q=s+q=v.
\]
Moreover, $r+1\le r+1+t\le2r-1$, 
ensuring that  every required local
assertion holds by Lemma~\ref{lem:blocks}. Finally,
$v=q+s\ge q+2r-1>2r$.  Lemma~\ref{lem:composition} applies.
\end{proof}

With $r=v-k$ and $q=d+1$, the two bounds in
Lemma~\ref{lem:middle} become $v\le2k-d$ and $v\le dr$,
respectively. Under the hypotheses of Theorem~\ref{thm:main},
Corollary~\ref{cor:high} covers $v>dr$. It therefore remains
to consider
\[
2k-d+1\le v\le dr.
\]
We will treat this range by working directly with $\F$
and its extension degrees.
The local argument will force every member of an extremal
family to contain exactly one uniquely extendible
$(k-1)$-subset. The following rigidity statement is a special
case of Bollob\'as's theorem as recorded in
\cite[Lemma~2.1]{FL}, which requires only at least one such
subset in each member. We include a direct proof of the
special case used here. It first shows that every
$(k-1)$-subset has degree either $1$ or $v-k+1$, then uses
single-element exchanges to identify a common star centre.

\begin{lemma}\label{lem:rigidity}
Let $2\le k<v$ and let $\F\subseteq\binom{V}{k}$ satisfy
$|\F|=\binom{v-1}{k-1}$.
If every member of $\F$ contains exactly one uniquely extendible
$(k-1)$-subset, then $\F$ is a full star.
\end{lemma}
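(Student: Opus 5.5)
The plan is a double count of pairs $(A,T)$ with $T\subseteq A\in\F$ and $|T|=k-1$. This count will force every $(k-1)$-subset of $V$ to have degree $1$ or $v-k+1$. A single-element exchange argument will then propagate a common centre through a full star.

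\emph{Step 1 (counting).} For $i\ge0$ let $N_i$ be the number of $(k-1)$-subsets $T\subseteq V$ with $\deg_\F(T)=i$.
\begin{itemize}
\item Each member contains exactly $k$ such subsets, so $\sum_i iN_i=k|\F|$.
\item A uniquely extendible $T$ lies in exactly one member, and each member contains exactly one such $T$. Hence $N_1=|\F|$.
\item The remaining $(k-1)|\F|$ incidences fall on subsets of degree between $2$ and $v-k+1$. Therefore $\sum_{i\ge2}N_i\ge (k-1)|\F|/(v-k+1)$.
\end{itemize}
Since $|\F|=\binom{v-1}{k-1}$ and $\frac{k-1}{v-k+1}\binom{v-1}{k-1}=\binom{v-1}{k-2}$, we obtain
\[
\binom{v}{k-1}\ \ge\ \sum_{i\ge1}N_i\ \ge\ \binom{v-1}{k-1}+\binom{v-1}{k-2}=\binom{v}{k-1}.
\]
So equality holds throughout. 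Consequently $N_0=0$, and every $(k-1)$-subset has degree exactly $1$ or exactly $v-k+1$. Degree $v-k+1$ means that $T\cup\{x\}\in\F$ for every $x\notin T$.

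\emph{Step 2 (local exchange).} For $A\in\F$, write $c(A)$ for the unique element such that $A\setminus\{c(A)\}$ is uniquely extendible. Fix $A$, and put $\alpha=c(A)$ and $T=A\setminus\{\alpha\}$. Take $\beta\in A\setminus\{\alpha\}$, which exists since $k\ge2$, and take $x\notin A$.
\begin{itemize}
\item The set $A\setminus\{\beta\}$ is not uniquely extendible, so by Step 1 it has full degree. Hence $B=(A\setminus\{\beta\})\cup\{x\}\in\F$.
\item Suppose $B\setminus\{\alpha\}=(T\setminus\{\beta\})\cup\{x\}$ had full degree. Adding $\beta$ would give $T\cup\{x\}\in\F$ with $x\ne\alpha$. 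This contradicts $\deg_\F(T)=1$, because the only member containing $T$ is $A$.
\end{itemize}
So $B\setminus\{\alpha\}$ is uniquely extendible, that is, $c(B)=\alpha$. Thus every single-element exchange that keeps $\alpha$ stays inside $\F$ and preserves the centre.

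\emph{Step 3 (propagation).} The sets containing $\alpha$ are connected under exchanges $A\mapsto (A\setminus\{\beta\})\cup\{x\}$ with $\beta\ne\alpha$; this is the Johnson graph on $\binom{V\setminus\{\alpha\}}{k-1}$. Iterating Step 2 from $A$ therefore shows that every $k$-set containing $\alpha$ lies in $\F$. Hence $\mathcal S_\alpha\subseteq\F$. Since $|\mathcal S_\alpha|=\binom{v-1}{k-1}=|\F|$, we conclude $\F=\mathcal S_\alpha$.

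I expect the main obstacle to be Step 2: identifying that the degree dichotomy from Step 1, combined with $\deg_\F(T)=1$, is exactly what pins the centre of a neighbouring member. The counting in Step 1 is routine once the binomial identity is noticed.
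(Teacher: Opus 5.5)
Your proposal is correct and follows the paper's proof essentially step for step. You force the dichotomy $\deg_\F(T)\in\{1,v-k+1\}$ by counting incidences with members of $\F$ and using the trivial bound $\binom{v}{k-1}$; the paper counts incidences with the non-members $\binom{V}{k}\setminus\F$, which is an equivalent complementary computation. Your single-element exchange for propagating the centre $\alpha$ is the same as the paper's, and Johnson-graph connectivity simply replaces its explicit walk that increases the intersection with a target set $A^*$.
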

\begin{proof}
Let $r=v-k$, and let
\[
 \Nn=\binom{V}{k}\setminus\F,\qquad
 \Tt_1=\{T\in\binom{V}{k-1}:\deg_\F(T)=1\}.
\]
By hypothesis, each $A\in\F$ contains exactly one member of
$\Tt_1$, and each member of $\Tt_1$ is contained in exactly one
$A\in\F$. Hence
\[
 |\Tt_1|=|\F|=\binom{v-1}{k-1}.
\]
Count pairs $(T,E)$ with $T\in\binom{V}{k-1}$,
$E\in\Nn$, and $T\subseteq E$. Every $E\in\Nn$ contains
exactly $k$ such subsets $T$. Conversely, each $T$ has $r+1$
possible extensions to a $k$-set, of which exactly
$\deg_\F(T)$ belong to $\F$. Thus
\[
 k|\Nn|=\sum_{T\in\binom{V}{k-1}}
                  (r+1-\deg_\F(T)).
\]
We have $|\Nn|=\binom{v-1}{k}$, while the
members of $\Tt_1$ contribute $r\binom{v-1}{k-1}$ to the sum.
Consequently,
\[
 \sum_{T\in\binom{V}{k-1}\setminus\Tt_1}
       (r+1-\deg_\F(T))
   =k\binom{v-1}{k}-r\binom{v-1}{k-1}=0.
\]
All summands are nonnegative, so each is zero. We obtain the
degree dichotomy
\begin{equation}\label{eq:dichotomy}
 \deg_\F(T)\in\{1,r+1\}
 \quad\text{for every }T\in\binom{V}{k-1}.
\end{equation}
Choose $A\in\F$ and let $\alpha\in A$ be the unique element
such that $A\setminus\{\alpha\}\in\Tt_1$.
Fix a target set $A^*\in\binom{V}{k}$ with $\alpha\in A^*$.
If $A\ne A^*$, choose $\beta\in A\setminus A^*$ and
$\gamma\in A^*\setminus A$. In particular,
$\beta\ne\alpha$ and $\gamma\ne\alpha$.
The set $A\setminus\{\beta\}$ is not the uniquely extendible
subset of $A$, so \eqref{eq:dichotomy} gives
$\deg_\F(A\setminus\{\beta\})=r+1$.
Every extension of this subset belongs to $\F$, and hence
$
 B=(A\setminus\{\beta\})\cup\{\gamma\}\in\F.
$
On the other hand,
$E=(A\setminus\{\alpha\})\cup\{\gamma\}\notin\F$,
because $A$ is the unique extension of
$A\setminus\{\alpha\}$ in $\F$.
Both $B$ and $E$ extend
\[
 B\setminus\{\alpha\}
   =(A\setminus\{\alpha,\beta\})\cup\{\gamma\}.
\]
Therefore $1\le\deg_\F(B\setminus\{\alpha\})\le r$,
and \eqref{eq:dichotomy} forces
$\deg_\F(B\setminus\{\alpha\})=1$.
Thus the same element $\alpha$ specifies the uniquely extendible
subset of $B$.

The exchange preserves this property and increases the intersection
with $A^*$ by one. Replacing $A$ by $B$ and repeating, we
therefore reach $A^*$, proving that $A^*\in\F$.
Every $k$-set containing $\alpha$ belongs to $\F$.
Since $|\F|=\binom{v-1}{k-1}=|\mathcal S_\alpha|$, it follows
that $\F=\mathcal S_\alpha$,   the full star with centre $\alpha$.
\end{proof}

We now use links to combine the local extension estimates.
When $v\ge2k-d$, two $k$-sets may intersect in exactly
$d$ elements; deleting their common intersection gives
a disjoint pair in the corresponding link.
The contrapositive of Lemma~\ref{lem:cluster-criterion}
forces both members of every such pair into the marked
subfamily, so Lemma~\ref{lem:marked-intersection} bounds
the size of each link. Summing these bounds over all
$d$-subsets of $V$ and applying the extension-degree
estimates yields a bound for $|\F|$. The strict coefficient
condition in the next lemma allows us to identify the
family when equality holds.

\begin{lemma}\label{lem:local}
Let $d\ge3$, $d<k<v$, and $r=v-k$. Suppose
\begin{equation}\label{eq:localconditions}
v\ge2k-d,\qquad d(r-1)>2r.
\end{equation}
Every $d$-simplex-free family $\F\subseteq\binom{V}{k}$
satisfies $ |\F|\le\binom{v-1}{k-1}$, with equality if and only if it is
a full star.
\end{lemma}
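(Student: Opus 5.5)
We work with the links of $d$-subsets. Assume $|\F|\ge\binom{v-1}{k-1}$; we will show that $|\F|=\binom{v-1}{k-1}$ and that $\F$ is a full star. The converse, that a full star is simplex-free, is already known.

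\textbf{Links and marked subfamilies.} For $D\in\binom{V}{d}$ put
\[
\F_D=\{A\setminus D: D\subseteq A\in\F\}\subseteq\binom{V\setminus D}{k-d}.
\]
Its ground set has size $v-d\ge 2(k-d)$, which is exactly the first condition in \eqref{eq:localconditions}. We mark $A\setminus D$, and put it in $\F_D^*$, when $D\cap P_1(A)\ne\varnothing$ or $D\subseteq P_2(A)$.

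Suppose $A\setminus D$ and $B\setminus D$ are disjoint in $\F_D$. Then $A\cap B=D$. Since $\F$ is $d$-simplex-free, it is also $d$-simplex-cluster-free, so the contrapositive of Lemma~\ref{lem:cluster-criterion} gives
\[
D\notin\binom{A\setminus P_1(A)}{d}\setminus\binom{P_2(A)}{d}.
\]
Hence $A\setminus D\in\F_D^*$, and by symmetry $B\setminus D\in\F_D^*$. Lemma~\ref{lem:marked-intersection} therefore applies to each link and gives
\[
|\F_D|\le\binom{v-d-1}{k-d-1}+\frac{r}{v-d}|\F_D^*|.
\]

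\textbf{Double counting.} We sum this bound over all $D$. Each $A\in\F$ lies in $\binom kd$ links. In its own links $A$ is marked exactly $\binom kd-\binom{k-p_1(A)}{d}+\binom{p_2(A)}{d}$ times; these are the $D$ meeting $P_1(A)$ together with the $D\subseteq P_2(A)$, and the two kinds are disjoint. This gives
\[
\binom kd|\F|\le\binom vd\binom{v-d-1}{k-d-1}
+\frac{r}{v-d}\sum_{A\in\F}\Bigl[\binom kd-\binom{k-p_1(A)}{d}+\binom{p_2(A)}{d}\Bigr].
\]
We bound the bracketed sum with Lemma~\ref{lem:marked-subsets}, which gives at most $\binom{k-1}{d-1}\sum_A(p_1+p_2/d)$.

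Next we use the second condition $d(r-1)>2r$. It says $1/d<(r-1)/(2r)$, so
\[
\sum_A\Bigl(p_1+\frac{p_2}{d}\Bigr)\le\sum_A\Bigl(p_1+\frac{r-1}{2r}p_2\Bigr),
\]
and Lemma~\ref{lem:extension-budget} bounds the right-hand side by $\binom{v-1}{k-1}$.

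For the first term, the identities $\binom vd\binom{v-d-1}{k-d-1}=\binom{v}{k}\binom kd\frac{k-d}{v-d}$ and $\binom{v-1}{k-1}=\frac kv\binom vk$, together with $\binom{k-1}{d-1}=\frac dk\binom kd$, turn the right-hand side into
\[
\binom kd\binom{v-1}{k-1}\cdot\frac{v(k-d)+rd}{k(v-d)}.
\]
Since $v(k-d)+rd=vk-dk=k(v-d)$, this is exactly $\binom kd\binom{v-1}{k-1}$. Hence $|\F|\le\binom{v-1}{k-1}$, so equality holds and every inequality in the chain is tight.

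\textbf{Equality analysis.} This is the delicate step.
\begin{itemize}
\item The step $1/d<(r-1)/(2r)$ is strict, so tightness there forces $p_2(A)=0$ for every $A\in\F$.
\item With $p_2(A)=0$, Lemma~\ref{lem:marked-subsets} compares $\binom kd-\binom{k-p_1}{d}$ with $\binom{k-1}{d-1}p_1$ term by term. Equality holds only when $p_1\in\{0,1\}$, because $d\ge2$ and $k>d$. I would check this termwise, through the telescoping $\binom kd-\binom{k-p}{d}=\sum_{j<p}\binom{k-1-j}{d-1}$, rather than trust that it holds only in aggregate.
\item Tightness in Lemma~\ref{lem:extension-budget} gives $\sum_A p_1(A)=\binom{v-1}{k-1}=|\F|$. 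Combined with $p_1(A)\le1$, this gives $p_1(A)=1$ for every $A\in\F$.
\end{itemize}
Thus every member of $\F$ contains exactly one uniquely extendible $(k-1)$-subset, and Lemma~\ref{lem:rigidity} shows that $\F$ is a full star.

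The main obstacle I expect is the termwise equality bookkeeping. In particular, I must make sure that tightness of the summed inequality genuinely forces pointwise equality in each of the three cited lemmas, and not merely in their sum.
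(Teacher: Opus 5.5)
Your proposal is correct and follows the paper's proof essentially step for step. It uses the same marked links $\Hh_D^*$, the same chain through Lemmas~\ref{lem:cluster-criterion}, \ref{lem:marked-intersection}, \ref{lem:marked-subsets} and \ref{lem:extension-budget}, and the same equality extraction: $p_2=0$, then $p_1\le 1$ by telescoping, then $p_1=1$ and Lemma~\ref{lem:rigidity}. Your termwise worry is settled as in the paper: once $p_2(A)=0$, the aggregate marked-subset inequality is exactly the sum of the pointwise telescoping bounds, each with nonnegative deficit, so tightness of the sum forces tightness for every $A$.
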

\begin{proof}
Assume $|\F|\ge\binom{v-1}{k-1}$; it suffices to prove that
$\F$ is a full star. The strict inequality in
\eqref{eq:localconditions} forces $r\ge2$ and gives
$
 \frac{r-1}{2r}>\frac1d.
$
Use the extension classes from \eqref{eq:extension-classes}.
Since $k<v$ and $|\F|\ge\binom{v-1}{k-1}$,
Lemma~\ref{lem:extension-budget} gives
\begin{equation}\label{eq:localbudget}
 \sum_{A\in\F}\left(p_1(A)+\frac{r-1}{2r}p_2(A)\right)
   \le\binom{v-1}{k-1}.
\end{equation}
We will compare $|\F|$ with this bound by counting marked
members of the links.
For each $D\in\binom{V}{d}$, define
\[
 \Hh_D=\{A\setminus D:A\in\F,\ D\subseteq A\}
\]
and mark the subfamily
\[
 \Hh_D^*=\{A\setminus D\in\Hh_D:
       D\cap P_1(A)\ne\varnothing
       \text{ or }D\subseteq P_2(A)\}.
\]
The member $A$ is recovered from $A\setminus D$ by adjoining
$D$, so the marking is well defined.
Suppose $A\setminus D$ and $B\setminus D$ are disjoint members
of $\Hh_D$. Since $k-d>0$, the sets $A,B$ are distinct and
$A\cap B=D$. If $A\setminus D$ were unmarked, then
\[
 D\in\binom{A\setminus P_1(A)}{d}\setminus\binom{P_2(A)}{d}.
\]
The hypotheses $d+1\le k$ and $v\ge2k-d$ allow us to apply
Lemma~\ref{lem:cluster-criterion}, giving a $d$-simplex-cluster
in $\F$, a contradiction. Thus $A\setminus D$ is marked.
Interchanging $A$ and $B$ proves that $B\setminus D$ is marked
as well.
We may therefore apply Lemma~\ref{lem:marked-intersection} on
$W=V\setminus D$, with $w=|W|=v-d$ and $u=k-d$.
Here $u\ge1$, and the hypothesis $v\ge2k-d$ gives
$w=v-d\ge2(k-d)=2u$. Also $w-u=r$. Thus
\begin{equation}\label{eq:marked}
 |\Hh_D|\le\binom{v-d-1}{k-d-1}
            +\frac{r}{v-d}|\Hh_D^*|.
\end{equation}
This application includes $v=2k-d$; no equality classification
for the marked-intersection inequality is needed.
It remains to bound the total marked contribution.
For a fixed $A\in\F$, the two marking conditions are mutually
exclusive because $P_1(A)\cap P_2(A)=\varnothing$.
The number of $d$-subsets meeting $P_1(A)$ is
$\binom{k}{d}-\binom{k-p_1(A)}{d}$, and the number contained in
$P_2(A)$ is $\binom{p_2(A)}{d}$.
Double counting, followed by Lemma~\ref{lem:marked-subsets},
the strict coefficient comparison, and \eqref{eq:localbudget},
gives
\begin{align}
 \sum_{D\in\binom{V}{d}}|\Hh_D^*|
 &=\sum_{A\in\F}
     \left[\binom{k}{d}-\binom{k-p_1(A)}{d}
                        +\binom{p_2(A)}{d}\right]\notag\\
 &\le\binom{k-1}{d-1}
       \sum_{A\in\F}\left(p_1(A)+\frac{p_2(A)}{d}\right)\notag\\
 &\le\binom{k-1}{d-1}
       \sum_{A\in\F}\left(p_1(A)+\frac{r-1}{2r}p_2(A)\right)\notag\\
 &\le\binom{k-1}{d-1}\binom{v-1}{k-1}.
 \label{eq:localsubsets}
\end{align}
The use of Lemma~\ref{lem:marked-subsets} is valid because
$d+1\le k<v$.
Each member of $\F$ occurs in exactly $\binom{k}{d}$ links.
Summing \eqref{eq:marked} over $D$ and using
\eqref{eq:localsubsets}, we obtain
\begin{align}
 |\F|\binom{k}{d}
 &\le\binom{v}{d}\binom{v-d-1}{k-d-1}
       +\frac{r}{v-d}\sum_{D\in\binom{V}{d}}|\Hh_D^*|\notag\\
 &\le\binom{v}{d}\binom{v-d-1}{k-d-1}
       +\frac{r}{v-d}\binom{k-1}{d-1}\binom{v-1}{k-1}\notag\\
 &=\binom{v-1}{k-1}\binom{k}{d}.
 \label{eq:completechain}
\end{align}
For the last identity, the relevant ratios are
\[
 \frac{\binom{v}{d}\binom{v-d-1}{k-d-1}}
      {\binom{v-1}{k-1}\binom{k}{d}}
   =\frac{v(k-d)}{k(v-d)},\qquad
 \frac{\binom{k-1}{d-1}}{\binom{k}{d}}=\frac{d}{k}.
\]
Since $r=v-k$, their weighted sum is
$
 \frac{v(k-d)+dr}{k(v-d)}=1.
$
It follows that $|\F|\le\binom{v-1}{k-1}$.
Our initial assumption forces equality. Because $r/(v-d)>0$,
all inequalities in \eqref{eq:localsubsets} and
\eqref{eq:completechain} must be equalities.

We now extract the local information contained in this saturation.
The difference between the two coefficient choices in
\eqref{eq:localsubsets} is
\[
 \binom{k-1}{d-1}
 \left(\frac{r-1}{2r}-\frac1d\right)
 \sum_{A\in\F}p_2(A).
\]
The coefficient is positive, so $p_2(A)=0$ for every $A\in\F$.
With these terms zero, the estimate for each remaining summand is
\begin{equation}\label{eq:pointwise-marking}
 \binom{k}{d}-\binom{k-p_1(A)}{d}
   =\sum_{j=1}^{p_1(A)}\binom{k-j}{d-1}
   \le p_1(A)\binom{k-1}{d-1}.
\end{equation}
Equality holds for $p_1(A)=0$ or $1$. If $p_1(A)\ge2$, the
second term alone falls short by
\[
 \binom{k-1}{d-1}-\binom{k-2}{d-1}
   =\binom{k-2}{d-2}>0.
\]
Summing \eqref{eq:pointwise-marking} gives precisely the
marked-subset inequality in \eqref{eq:localsubsets} after
$p_2(A)=0$. Since that inequality is sharp and every pointwise
deficit is nonnegative, equality holds for each $A$.
Thus $p_1(A)\le1$ for every member. Finally, equality in
\eqref{eq:localbudget} gives
\[
 \sum_{A\in\F}p_1(A)=\binom{v-1}{k-1}=|\F|,
\]
giving $p_1(A)=1$ for every $A\in\F$.
Lemma~\ref{lem:rigidity} now implies that $\F$ is a full star.
Conversely, a full star has $\binom{v-1}{k-1}$ members and
contains no simplex.
\end{proof}

We now assemble the proof of the main result. 
After treating $d=2$, $k=d+1$, and the high-density range,
we separate the remaining parameters at $v=2k-d$.
In the range assigned to the local argument, its strict
coefficient condition fails only at $(d,k,v)=(3,5,8)$.
Two two-member blocks handle this case.

\begin{proof}[Proof of Theorem~\ref{thm:main}]
Assume $|\F|\ge\binom{v-1}{k-1}$. It suffices to prove that
$\F$ is a full star. The case $d=2$ follows from
Lemma~\ref{lem:triangle}. If $k=d+1$, admissibility gives
\[
 v\ge\left\lceil\frac{k^2}{k-1}\right\rceil=k+2,
\]
so Lemma~\ref{lem:chvatal} applies, including its equality clause.
We may therefore assume
$
 d\ge3
$ and 
$k\ge d+2.
$
Let  $r=v-k$, $q=d+1$, and
$\F^c=\{V\setminus A:A\in\F\}$. Then $r\ge2$, $v\le qr$,
and
\[
 |\F^c|=|\F|\ge\binom{v-1}{k-1}=\binom{v-1}{r}.
\]
Since $\F$ is $d$-simplex-free, $\F^c$ has no irredundant
$q$-member cover of $V$. Whenever a cover theorem gives the
exceptional family \eqref{eq:isolatedcomplete}, complementation
therefore yields $\F=\mathcal S_\alpha$.

\medskip\noindent
\textbf{Case 1}: $v>dr$.
Here $(q-1)r<v\le qr$, so Corollary~\ref{cor:high} gives
\eqref{eq:isolatedcomplete} and hence the conclusion.

\medskip\noindent
\textbf{Case 2}: $v\le dr$ and $v\le2k-d$.
Using $k=v-r$ and $q=d+1$, the second inequality is equivalent
to $v\ge d+2r$, or $v-q\ge2r-1$.
The first inequality is equivalent to
$v-q\le(r-1)q-r$. Thus Lemma~\ref{lem:middle} applies and
gives the conclusion.

\medskip\noindent
\textbf{Case 3}: $v\le dr$ and $v\ge2k-d+1$.
The assumption $k\ge d+2$ gives $v\ge d+r+2$.
The inequality $v\ge2k-d+1$, after substituting $k=v-r$,
gives $v\le d+2r-1$. Consequently,
$
 d+r+2\le v\le d+2r-1,
$
which forces $r\ge3$. For $d,r\ge3$,
$
 d(r-1)-2r=(d-2)(r-1)-2\ge0,
$
with equality only when $d=r=3$.
If $(d,r)\ne(3,3)$, then $d(r-1)>2r$ and $v\ge2k-d$,
and  Lemma~\ref{lem:local} gives the conclusion.
If $d=r=3$, the displayed interval forces $v=8$, whence $k=5$
and $q=4$. Two blocks with parameters $(v_i,q_i)=(4,2)$ have
total size $8$ and total cover size $4$.
The local assertion $\cov_3(4,2)$ holds by
Lemma~\ref{lem:blocks}, and $8>2\cdot3$.
Lemma~\ref{lem:composition} therefore gives
\eqref{eq:isolatedcomplete}, as required.

\medskip
The three cases exhaust the remaining admissible integer
parameters. Every simplex-free family of size at least
$\binom{v-1}{k-1}$ is therefore a full star. Since a full star
has exactly this size and contains no simplex, both the upper
bound and the equality characterization follow.
\end{proof}

\end{document}